\newtheorem{theorem}{Theorem}[section]
\newtheorem{corollary}[theorem]{Corollary}
\newtheorem{lemma}[theorem]{Lemma}
\newtheorem{proposition}[theorem]{Proposition}
\newtheorem{assumption}[theorem]{Assumption}
\theoremstyle{definition}
\newtheorem{definition}[theorem]{Definition}
\theoremstyle{remark}
\numberwithin{equation}{section}
\begin{document}
\def\Pro{{\mathbb{P}}}
\def\E{{\mathbb{E}}}
\def\e{{\varepsilon}}
\def\veps{{\varepsilon}}
\def\ds{{\displaystyle}}
\def\nat{{\mathbb{N}}}
\def\Dom{{\textnormal{Dom}}}
\def\dist{{\textnormal{dist}}}
\def\R{{\mathbb{R}}}
\def\O{{\mathcal{O}}}
\def\T{{\mathcal{T}}}
\def\Tr{{\textnormal{Tr}}}
\def\sgn{{\textnormal{sign}}}
\def\I{{\mathcal{I}}}
\def\A{{\mathcal{A}}}
\def\H{{\mathcal{H}}}
\def\S{{\mathcal{S}}}

\title{Existence and uniqueness for the mild solution of the stochastic heat equation with non-Lipschitz drift on an unbounded spatial domain %
\author{M. Salins}
\thanks{Accepted for publication in \textit{Stochastics and Partial Differential Equations: Analysis and Computations} August 31, 2020.}}%
\date{August 31, 2020}%
\maketitle
\begin{abstract}
  We prove the existence and uniqueness of the mild solution for a nonlinear stochastic heat equation defined on an unbounded spatial domain. The nonlinearity is not assumed to be globally, or even locally, Lipschitz continuous. Instead the nonlinearity is assumed to satisfy a one-sided Lipschitz condition.  First, a strengthened version of the Kolmogorov continuity theorem is introduced to prove that the stochastic convolutions of the fundamental solution of the heat equation and a spatially homogeneous noise grow no faster than polynomially. Second, a deterministic mapping that maps the stochastic convolution to the solution of the stochastic heat equation is proven to be Lipschitz continuous on polynomially weighted spaces of continuous functions. These two ingredients enable the formulation of a Picard iteration scheme to prove the existence and uniqueness of the mild solution.
\end{abstract}

\section{Introduction} \label{S:intro}
We prove the existence and uniqueness of a mild solution to the nonlinear stochastic heat equation
\begin{equation} \label{eq:SPDE}
  \begin{cases}
    \displaystyle{\frac{\partial u}{\partial t}(t,x) = \frac{1}{2}\Delta u(t,x) + f(u(t,x)) + \sigma(u(t,x))\dot{W}(t,x), \ \ t >0, \ \ x \in \mathbb{R}^d, }\\
    \displaystyle{u(0,x) = u_0(x)}
  \end{cases}
\end{equation}
in the case where $f:\mathbb{R} \to \mathbb{R}$ is not globally Lipschitz continuous. Instead we assume that there exists $\kappa \in \mathbb{R}$ such that for any $u_1< u_2$
\begin{equation} \label{eq:half-Lip}
  f(u_2) - f(u_1)\leq \kappa(u_2 - u_1),
\end{equation}
and $f$ satisfies a growth condition.

As a motivating example, let $f:\mathbb{R} \to \mathbb{R}$ be an odd-degree polynomial with negative leading coefficient
\[f(u) = -\alpha u^{2m+1} + \sum_{k=0}^{2m} a_k u^k\]
where $m \in \mathbb{N}$, $\alpha>0$ and $a_k \in \mathbb{R}$.  Such polynomials are not globally Lipschitz continuous, but they do satisfy \eqref{eq:half-Lip} where $\kappa = {\sup_u f'(u)}$, which is finite because $f'(u)$ is an even polynomial with negative leading term. Our set-up also allows us to consider some pathological drift terms that are not locally Lipschitz continuous such as decreasing functions that are not absolutely continuous with respect to Lebesgue measure.

In \eqref{eq:SPDE}, $\dot{W}$ is a Gaussian noise that is white in time and spatially homogeneous satisfying the strong Dalang condition (see Assumption \ref{assum:Dalang} below). We assume that $\sigma$ is globally Lipschitz continuous.

A mild solution to \eqref{eq:SPDE} is defined to be an adapted random field solution to the integral equation (see \cite{walsh})
\begin{align} \label{eq:mild}
  u(t,x) = &\int_{\mathbb{R}^d} G(t,x-y) u_0(y)dy + \int_0^t \int_{\mathbb{R}^d} G(t-s,x-y)f(u(s,y))dyds \nonumber\\
  &+ \int_0^t \int_{\mathbb{R}^d} G(t-s,x-y)\sigma(u(s,y))W(dyds),
\end{align}
where $G(t,x) := (2 \pi t)^{-\frac{d}{2}} e^{-\frac{|x|^2}{2t}}$ is the fundamental solution of the heat equation in $\mathbb{R}^d$.

The existence and uniqueness of solutions to \eqref{eq:SPDE} when $f$ and $\sigma$ are both globally Lipschitz continuous was proved by Dalang \cite{d-1999} using a Picard iteration argument that converges to a unique solution in the metric
\[\sup_{t \in [0,T]} \sup_{x \in \mathbb{R}^d} \E |u_{n+1}(t,x) - u_n(t,x)|^p\]
for appropriate choices of $T>0$ and $p>1$.
This metric will not be appropriate for proving existence and uniqueness in our setting.

The existence and uniqueness of stochastic reaction-diffusion equations defined on a bounded spatial domain $D \subset \mathbb{R}^d$ was proved by Cerrai \cite{c-2003} under the assumptions that the nonlinear reaction terms are locally Lipschitz continuous, have polynomial growth, and satisfy certain dissipativity properties. Because she assumed that the nonlinearities were locally Lipschitz continuous, she could construct a family $\{f_R(u)\}_{R>0}$ of globally Lipschitz continuous vector-fields that satisfy $f(u) = f_R(u)$ for all $|u|\leq R$. Using Picard iteration arguments in the metric
\[\E \left|\sup_{t \in [0,T]} \sup_{x \in D} |u_{n+1,R}(t,x) - u_{n,R}(t ,x)| \right|^p,\]
Cerrai proved that there exists a unique solution $u_R$ for any $R>0$ where the vector-field $f$ is replaced by $f_R$. Because the definition of $f_R$ guarantees that $f_R(u) = f(u)$ for $|u|\leq R$, the solutions $u_R(t,x)$ to the modified equation and the solution $u(t,x)$ to the original equation must coincide until the stopping time
\[\tau_R = \inf\left\{t>0: \sup_{x \in D} |u(t,x)| \geq R\right\},\]
for any $R>0$, where $D$ is the bounded spatial domain.

Then, using a-priori estimates derived from the dissipativity assumptions on $f$, Cerrai proved that these stopping times $\tau_R$ are finite for any $R>0$ and approach infinity as $R \to \infty$. Similar stopping time arguments have been used by many authors including Gy\"ongy and Rovira for Burgers-type equations \cite{gr-2000}. There have been other recent investigations of SPDEs on finite spatial domains and Banach-space-valued stochastic processes more generally that are exposed to nonlinearities that are not Lipschitz continuous \cite{dkz-2019,grt-2018,m-2013,mnq-2013,mr-2010}.

In the unbounded domain setting, such a stopping time argument does not work in general. If $\sigma$ is bounded away from zero then the random field solutions to \eqref{eq:SPDE} (if they even exist) will have the property that for any $t>0$
\[\Pro\left(\sup_{x \in \mathbb{R}^d} |u(t,x)| = +\infty \right) = 1.\]
The fact that the solution is unbounded with probability one prevents us from taking advantage of the local Lipschitz continuity of $f$.

Iwata published one of the earliest results proving existence and uniqueness for solutions to SPDEs defined on an unbounded space, like \eqref{eq:SPDE}, where the drift terms are not necessarily globally Lipschitz continuous \cite{i-1987}. Iwata assumed that the $f$ terms are locally Lipschitz continuous, have polynomial growth, and satisfy a dissipativity assumption similar to \eqref{eq:half-Lip}. Iwata's method compares the SPDE defined on the spatial domain $\mathbb{R}$ to a family of SPDEs defined on a bounded interval spatial domain and proving that the limit of the solutions, as the domains converge to the whole line, solves the desired SPDE.

When the noise term $\sigma \dot{W}$ is assumed to be particularly regular, then the mild solutions \eqref{eq:mild} may be bounded in space.  In this case, several authors have investigated the well-posedness of \eqref{eq:SPDE} with non-Lipschitz continuous drift.  Funaki studied a similar SPDE to \eqref{eq:SPDE}, with the added restriction that $\sigma(x) =0$ for $|x|\geq 1$ \cite{f-1995}.  Eckmann and Hairer investigated ergodic properties of an SPDE with a cubic nonlinearity defined on an unbounded spatial domain \cite{eh-2001}, but they made strong assumptions on the noise term that ensured that their solutions were bounded in space. Chow investigated a similar problem for semilinear stochastic wave equations defined on unbounded spatial domains \cite{c-2002}. 

Recently, the well-posedness for certain SPDEs defined on unbounded domains and exposed to additive space-time white noise has been proven. Bl\"omker and Han studied a stochastic complex Ginzburg-Landau equation exposed to additive space-time white noise in spatial dimension one \cite{bh-2010}. Bl\"omker and Han prove that the solutions exist in a weighted $L^2$ space and prove existence and uniqueness via finite-dimensional Galerkin approximations and a-priori bounds on the solutions.  Bianchi, Bl\"omker, and Schneider investigated proved the well-posedness for the Swift-Hohenberg equation on the whole line \cite{bbs-2017}, studying the problem in weighted function spaces and approximating the unbounded domain with finite domains. Both of these investigations considered the case of additive space-time white noise in spatial dimension one.

Mourrat and Weber recently proved the well posedness of the renormalized $\Phi^4$ model exposed two space-time white noise, defined on the two-dimensional plane \cite{mw-2017}. The result builds on the work of Jona-Lasinio and Mitter \cite{jm-1985}, Parisi and Wu \cite{pw-1981}, Hairer and Labb\'e \cite{hl-2015}, Da Prato and Debussche \cite{dd-2003}, and others. The well-posedness for \eqref{eq:SPDE} differs from the setting of \cite{mw-2017} because we assume that the noise satisfies the strong Dalang condition, prohibiting space-time white noise in dimension two. We do not require renormalization procedures, and the solutions in our setting are function-valued.

We also remind the reader of the existence and uniqueness results of  Mytnik, Perkins, and Sturm, and the non-uniqueness results of Mueller, Mytnik, and Perkins in the setting where when $\sigma$ fails to be globally Lipshchitz continuous \cite{mps-2006,mmp-2014,mp-2011}. In those papers, the authors considered the case where $f$ is Lipschitz continuous, but $\sigma$ is H\"older continuous. In the current paper we assume that $\sigma$ is globally Lipschitz continuous and that $f$ is not Lipschitz continuous. In the future it will be interesting to see if one can prove existence and uniqueness for \eqref{eq:SPDE} when $\sigma$ is only H\"older continuous and $f$ satisfies \eqref{eq:half-Lip}.

In this paper we develop a new method for proving existence and uniqueness of \eqref{eq:SPDE}.
First, we prove a version of the Kolmogorov continuity theorem. The classic Kolmogorov Theorem says that if there exists $x_0 \in \mathbb{R}^d$, $A>0$, $p>1$, and $\gamma\in (0,1)$ such that $\gamma p> d$ and a random field $X: \mathbb{R}^d \times \Omega \to \mathbb{R}$ satisfies
\[\E|X(x_0)|^p \leq A \text{ and } \E |X(x_1)-X(x_2)|^p \leq A |x_1 - x_2|^{\gamma p}, \text{ for } x_1,x_2 \in \mathbb{R}^d,\]
then there exists a H\"older continuous modification of this field (see Theorem 4.3 of \cite{dkmnx-2009}). In this paper we prove that such a process $X$ has the additional property that for any $\theta> \frac{\gamma p}{p-d}$,
\begin{equation}
  \E\left|\sup_{x \in \mathbb{R}^d} \frac{ |X(x)| }{1 + |x-x_0|^\theta} \right|^p \leq CA.
\end{equation}
We apply this theorem along with well-known estimates of Sanz-Sol\'e and Sarr\`a \cite{sss-1999} to prove that stochastic convolutions $Z(t,x)$  of the form
\begin{equation} \label{eq:stoch-conv-sigma}
  Z(t,x) = \int_0^t \int_{\mathbb{R}^d} G(t-s,x-y) \sigma(s,y)W(dyds)
\end{equation}
satisfy
\begin{equation} \label{eq:intro-stoch-conv-bound}
  \sup_{x_0 \in \mathbb{R}^d} \E \left|\sup_{t \in [0,T]} \sup_{x \in \mathbb{R}^d} \frac{Z(t,x)}{1 + |x-x_0|^\theta} \right|^p \leq C_{T,p,\theta}\sup_{t \in [0,T]}\sup_{x \in \mathbb{R}^d} \E|\sigma(t,x)|^p
\end{equation}
for appropriate choices of $p$ and $\theta$.

Then we consider an associated deterministic problem. Given a continuous deterministic function $z: [0,T] \times \mathbb{R}^d \to \mathbb{R}$ we try to find a solution to the integral equation.
\[\mathcal{M}(z)(t,x) = \int_0^t \int_{\mathbb{R}^d}G(t-s,x-y)f(\mathcal{M}(z)(s,y))dyds + z(t,x).\]
We prove that $\mathcal{M}$ is a well-defined function with nice properties when considered as a map $C_{x_0,\theta}([0,T]\times \mathbb{R}^d) \to C_{x_0,\theta}([0,T]\times \mathbb{R}^d)$ where these are weighted spaces of continuous functions $\psi:[0,T]\times \mathbb{R}^d \to \mathbb{R}$ that satisfy
\begin{equation}
  \lim_{|x| \to \infty} \sup_{t \in [0,T]} \frac{|\psi(t,x)|}{ 1 + |x-x_0|^{\theta}} = 0,
\end{equation}
endowed with the norm
\begin{equation}
  |\psi|_{C_{x_0,\theta}([0,T]\times\mathbb{R}^d)} := \sup_{t \in [0,T]}\sup_{x \in \mathbb{R}^d} \frac{|\psi(t,x)|}{1 + |x-x_0|^{\theta}}.
\end{equation}
Furthermore, we prove that the mapping $\mathcal{M}$ is Lipschitz continuous as a map from $C_{x_0,\theta}([0,T]\times \mathbb{R}^d)$ to itself, even though $f:\mathbb{R} \to \mathbb{R}$ fails to be Lipschitz continuous.

Once we prove that $\mathcal{M}$ is well-defined, we can observe that the mild solution to \eqref{eq:mild} will satisfy
\begin{equation}
  u(t,x) = \mathcal{M}(U_0 + Z)(t,x)
\end{equation}
where $U_0(t,x) = \int_{\mathbb{R}^d} G(t,x-y) u_0(y)dy$ and $Z$ is the stochastic convolution
\begin{equation} \label{eq:stoch-conv}
  Z(t,x) = \int_0^t \int_{\mathbb{R}^d} G(t-s,x-y) \sigma(u(s,y))W(dyds).
\end{equation}
We argue that proving the existence and uniqueness of the solution $Z$ to the equation
\begin{equation} \label{eq:Z-M-eq}
  Z(t,x)= \int_0^t \int_{\mathbb{R}^d} G(t-s,x-y)\sigma(\mathcal{M}(U_0 + Z)(s,y))W(dyds)
\end{equation}
is equivalent to proving the existence and uniqueness of $u$ solving \eqref{eq:mild}.

To prove the existence and uniqueness of $Z$ solving \eqref{eq:Z-M-eq}, we  build a Picard iteration scheme $Z_n$ that is a contraction in the metric
\[\sup_{x_0 \in \mathbb{R}^d} \E\left|\sup_{t \in [0,T]}\sup_{x \in \mathbb{R}^d} \frac{|Z_{n+1}(t,x) - Z_n(t,x)|}{1 + |x-x_0|^{\theta}}\right|^p.\]
The idea here is that while for any $t>0$, $Z(t,x)$ is almost surely unbounded in $x$, no particular $x$ value is likely to be big. We need to consider a weighted supremum norm inside of the expectation, but we can take the supremum over the center of the weight outside of the expectation. This approach is sort of a combination of the approaches used in \cite{c-2003} and \cite{d-1999}. We argue that the limit $Z$ of the $Z_n$ must solve \eqref{eq:Z-M-eq}. Finally, $u= \mathcal{M}(U_0 + Z)$ will be a mild solution to \eqref{eq:mild}.



In Section \ref{S:prelim}, we list our main assumptions and state the main existence and uniqueness result, Theorem \ref{thm:exist-unique}. In Section \ref{S:fnction-spaces}, we define the main function spaces that we will use in this paper including weighted spaces of continuous functions, the dual space of $C_0$ and the subdifferential, fractional Sobolev spaces, and H\"older spaces.  In Section \ref{S:fnction-spaces}, we also recall the fractional Sobolev embedding theorem. In Section \ref{S:Kolmogorov}, we prove that random fields satisfying the standard assumptions of the Kolmogorov continuity theorem belong to the weighted spaces of continuous function $C_{x_0,\theta}([0,T]\times \mathbb{R}^d)$ defined in Section \ref{S:fnction-spaces}. Then we apply this new version of the Kolmogorov Theorem to show that stochastic convolutions belong to these weighted spaces. In Section \ref{S:M}, we define the mapping $\mathcal{M}$ and prove many of its important properties including Lipschitz continuity and a priori bounds. In Section \ref{S:existence}, we prove Theorem \ref{thm:exist-unique}, which states that there exists a unique mild solution to \eqref{eq:mild}.

\section{Assumptions and main result} \label{S:prelim}
Fix a filtered probability space $(\Omega, \mathcal{F}, \mathcal{F}_t, \Pro)$.

\begin{assumption} \label{assum:Dalang}
  Let $\dot W$ be a spatially homogeneous Gaussian noise, adapted to the filtration $\mathcal{F}_t$, that is white in time and spatially correlated with correlation measure $\Lambda$. Formally,
  \begin{equation} \label{eq:covariance}
    \E[\dot W(t,x) \dot W(s,y)] = \delta(t-s) \Lambda(x-y).
  \end{equation}
  In the above expression, $\delta$ denotes the delta measure at zero and $\Lambda$ is positive and positive definite. Its Fourier transform $\mu = \mathscr{F}(\Lambda)$ is a positive measure satisfying the strong Dalang condition
  \begin{equation} \label{eq:Dalang-cond}
    \int_{\mathbb{R}^d} \frac{1}{1 + |\xi|^{2(1-\eta)}} \mu(d\xi)<+\infty
  \end{equation}
  for some $\eta \in (0,1)$.
\end{assumption}

\begin{assumption} \label{assum:sigma}
  The noise coefficient $\sigma: \mathbb{R} \to \mathbb{R}$ is globally Lipschitz continuous.
\end{assumption}

\begin{assumption} \label{assum:f}
  The drift $f:\mathbb{R} \to \mathbb{R}$ is continuous and has the property that there exists $\kappa\in \mathbb{R}$ such that for any $u_1,u_2 \in \mathbb{R}$ satisfying $u_1<u_2$,
  \begin{equation}
    \label{eq:f-cond-dissip}
    f(u_2) - f(u_1) \leq \kappa (u_2 - u_1).
  \end{equation}
  Additionally,  $f$ satisfies the growth condition that there exist constants $K>0$ and $\nu\geq 0$ such that for all $u \in \mathbb{R}$
  \begin{equation} \label{eq:f-growth}
    |f(u)| \leq  Ke^{K|u|^\nu}.
  \end{equation}
\end{assumption}
\noindent We comment that \eqref{eq:f-growth} is a much weaker assumption than the assumption that $f$ has polynomial growth.

The following proposition lists, without proof, two statements that are equivalent to \eqref{eq:f-cond-dissip}. In this proposition the function $\sgn: \mathbb{R} \to \mathbb{R}$ is defined to be
\begin{equation} \label{eq:sign-def}
  \sgn(u) := \begin{cases} -1 & \text{ if } u<0 \\ 0 & \text{ if } u=0 \\ 1 &\text{ if } u>0.\end{cases}
\end{equation}
\begin{proposition} \label{prop-f-cond-equiv}
  The following are equivalent to \eqref{eq:f-cond-dissip}
  \begin{enumerate}
    \item For all $u_1, u_2 \in \mathbb{R}$,
    \begin{equation} \label{eq:f-cond-sign}
      (f(u_1) - f(u_2))\sgn(u_1 - u_2) \leq \kappa|u_1 - u_2|.
    \end{equation}
    \item $\phi(u) := f(u) - \kappa u$ is a non-increasing function.
  \end{enumerate}
\end{proposition}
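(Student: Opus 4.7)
My plan is to reduce both alternative formulations to the original inequality \eqref{eq:f-cond-dissip} by direct algebraic manipulation, exploiting the fact that each condition is really just a different way of stating the same inequality. I would prove the two equivalences $(0)\Leftrightarrow(1)$ and $(0)\Leftrightarrow(2)$ separately rather than a cycle of implications, since each is a short one-step calculation.

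For $(0)\Leftrightarrow(2)$, I would simply subtract $\kappa u_i$ from both sides of \eqref{eq:f-cond-dissip}. Writing $\phi(u)=f(u)-\kappa u$, the condition $f(u_2)-f(u_1)\le\kappa(u_2-u_1)$ for $u_1<u_2$ becomes $\phi(u_2)\le\phi(u_1)$, which is exactly the statement that $\phi$ is non-increasing. Both directions are immediate from this rearrangement.

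For $(0)\Leftrightarrow(1)$, I would handle the three cases $u_1<u_2$, $u_1>u_2$, and $u_1=u_2$ separately using the definition \eqref{eq:sign-def} of $\sgn$. When $u_1<u_2$, $\sgn(u_1-u_2)=-1$ and $|u_1-u_2|=u_2-u_1$, so \eqref{eq:f-cond-sign} reads $f(u_2)-f(u_1)\le\kappa(u_2-u_1)$, which is precisely \eqref{eq:f-cond-dissip}. The case $u_1>u_2$ follows by symmetry (swapping the roles of the indices in \eqref{eq:f-cond-dissip}, which is a statement symmetric in its two arguments under relabeling), and the case $u_1=u_2$ gives $0\le 0$ trivially. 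This shows \eqref{eq:f-cond-sign} is equivalent to requiring \eqref{eq:f-cond-dissip} hold for ordered pairs.

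There is no real obstacle here; the proposition is a bookkeeping statement that makes the author's sign conventions explicit so that the sign and subdifferential arguments later in the paper (which presumably use forms $(1)$ or $(2)$) can be freely interchanged with the dissipativity hypothesis as stated in Assumption \ref{assum:f}. For this reason I would keep the proof to a few lines, organized as the two equivalences above.
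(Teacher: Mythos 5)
Your proof is correct. The paper states this proposition explicitly ``without proof,'' so there is no argument in the text to compare against; your two reductions — rewriting \eqref{eq:f-cond-dissip} as $\phi(u_2)\le\phi(u_1)$ after subtracting $\kappa u_i$, and resolving the $\sgn$ in \eqref{eq:f-cond-sign} by the trichotomy on $u_1-u_2$ — are exactly the elementary manipulations one would supply, and both directions of each equivalence follow from them.
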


\begin{assumption} \label{assum:init-data}
  The initial data $u_0: \mathbb{R}^d \times \Omega \to \mathbb{R}$ is $\mathcal{F}_0$-measurable (independent of $\dot{W}$), and satisfies
  \begin{equation} \label{eq:init-data-assumption}
    \sup_{x_0 \in \mathbb{R}^d} \E \left|\sup_{x \in \mathbb{R}^d}\frac{|u_0(x)|}{1 + |x-x_0|^\theta}\right|^p< \infty.
  \end{equation}
  for some
  \begin{equation} \label{eq:p-cond}
    \theta \in (0,2/\nu) \ \ \text{ and } \ \  p> \max\left\{\frac{(1+\theta)(d+1)}{\theta}, \frac{2(d+1)}{\eta} \right\},
  \end{equation}
  where $\nu$ is the constant from \eqref{eq:f-growth}, $\eta$ is from the strong Dalang condition \eqref{eq:Dalang-cond}, and $d$ is the spatial dimension.

  If $u_0$ is deterministic, this assumption requires
  \begin{equation} \label{eq:u_0-bded}
    \sup_{x \in \mathbb{R}^d} |u_0(x)|< +\infty.
  \end{equation}
  In the case where $u_0$ is deterministic and bounded, \eqref{eq:init-data-assumption} holds for any $\theta$ and $p$ satisfying \eqref{eq:p-cond}.
\end{assumption}


The following is the main result of this paper.
\begin{theorem} \label{thm:exist-unique}
  Under Assumptions \ref{assum:Dalang}, \ref{assum:sigma}, \ref{assum:f}, and \ref{assum:init-data}, there exists a mild solution to \eqref{eq:mild}. Furthermore, this mild solution has the property that for  $p $ and $\theta$ satisfying \eqref{eq:init-data-assumption}-\eqref{eq:p-cond}, and any $T>0$,
  \begin{equation} \label{eq:solution-bounds}
    \sup_{x_0 \in \mathbb{R}^d} \E \left|\sup_{t \in [0,T]} \sup_{x \in \mathbb{R}^d} \frac{|u(t,x)|}{1 + |x-x_0|^\theta} \right|^p <+\infty.
  \end{equation}
  The solution is unique in the class of adapted random fields satisfying
  \begin{equation} \label{eq:Lp-bounded}
    \sup_{t \in [0,T]}\sup_{x \in \mathbb{R}^d} \E |\sigma(u(t,x))|^p<+\infty.
  \end{equation}
\end{theorem}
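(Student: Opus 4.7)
The plan is to follow the program sketched in the introduction and recast (\ref{eq:mild}) as a fixed point equation for the stochastic convolution $Z$ rather than for $u$ itself. Observe that any adapted $u$ solves (\ref{eq:mild}) if and only if $Z$ defined by (\ref{eq:stoch-conv}) solves (\ref{eq:Z-M-eq}) and $u = \mathcal{M}(U_0 + Z)$. This reformulation is decisive: although $f$ is not Lipschitz, the deterministic map $\mathcal{M}$ has been shown in Section \ref{S:M} to be Lipschitz on the weighted space $C_{x_0,\theta}([0,T]\times \R^d)$, and the stochastic convolution bound (\ref{eq:intro-stoch-conv-bound}) controls Picard iterates of $Z$ in the same weighted norm.

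Next I would set $Z_0 \equiv 0$ and define the Picard iteration
\[
Z_{n+1}(t,x) := \int_0^t \int_{\R^d} G(t-s,x-y)\,\sigma(\mathcal{M}(U_0 + Z_n)(s,y))\,W(dy\,ds).
\]
Subtracting consecutive iterates and applying (\ref{eq:intro-stoch-conv-bound}) with integrand $\sigma(\mathcal{M}(U_0+Z_n)) - \sigma(\mathcal{M}(U_0+Z_{n-1}))$ yields
\[
\sup_{x_0 \in \R^d} \E\bigl| Z_{n+1}-Z_n \bigr|_{C_{x_0,\theta}}^p \leq C_{T,p,\theta}\sup_{s,y} \E\bigl|\sigma(\mathcal{M}(U_0+Z_n)(s,y)) - \sigma(\mathcal{M}(U_0+Z_{n-1})(s,y))\bigr|^p.
\]
Using the Lipschitz property of $\sigma$ and then the Lipschitz property of $\mathcal{M}$ with weight centered at $y$ (so that $1+|y-y|^\theta = 1$), the right-hand side is controlled by $L_\sigma^p L_{\mathcal{M}}^p \sup_{x_0} \E|Z_n - Z_{n-1}|_{C_{x_0,\theta}}^p$. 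For $T$ sufficiently small, $C_{T,p,\theta} L_\sigma^p L_{\mathcal{M}}^p < 1$, so the iteration is a strict contraction in the complete metric $\sup_{x_0}\E|\cdot|_{C_{x_0,\theta}}^p$. Verifying that $Z_n \in C_{x_0,\theta}$ at each step uses Assumption \ref{assum:init-data} on $u_0$ (which forces $U_0 \in C_{x_0,\theta}$ via heat kernel bounds) together with the a priori growth estimates for $\mathcal{M}$ from Section \ref{S:M}.

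The limit $Z$ solves (\ref{eq:Z-M-eq}) by passing to the limit in the stochastic integral (using the Lipschitz continuity of $\sigma \circ \mathcal{M}$ to justify the convergence of the integrands in $L^p$), and then $u := \mathcal{M}(U_0 + Z)$ satisfies (\ref{eq:mild}). To bootstrap from small $T$ to arbitrary $T>0$, I would observe that the a priori bound (\ref{eq:solution-bounds}) on the solution over the small interval together with the uniform-in-initial-data growth estimates for $\mathcal{M}$ allow the argument to be restarted from any intermediate time without loss; concatenating finitely many such pieces yields a solution on $[0,T]$ for arbitrary $T$. Uniqueness in the class (\ref{eq:Lp-bounded}) follows by the same contraction estimate applied to the difference of two solutions: if $u^{(1)}, u^{(2)}$ both satisfy (\ref{eq:mild}), then $Z^{(1)} - Z^{(2)}$ satisfies the analogue of the one-step Picard bound with $\tfrac{1}{2}\|Z^{(1)}-Z^{(2)}\|$ on the left and the same on the right over a small interval, forcing equality.

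The main obstacle is the interplay between the weight-centering on the deterministic and stochastic sides. The stochastic convolution bound (\ref{eq:intro-stoch-conv-bound}) is phrased with the weight center $x_0$ \emph{outside} the expectation, while the Lipschitz property of $\mathcal{M}$ requires that the same center be used for both sides of its input and output. The correct matching is to place $x_0$ equal to the integration point $y$ when one bounds the pointwise expectation $\E|\sigma(\mathcal{M}(U_0+Z_n)(s,y)) - \sigma(\mathcal{M}(U_0+Z_{n-1})(s,y))|^p$, which is precisely why the iteration must be run in the norm $\sup_{x_0}\E|\cdot|^p_{C_{x_0,\theta}}$ rather than with any single fixed center. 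Making this choice of center consistent through each step of the iteration, and checking that the resulting constants are uniform in $x_0$, is the delicate bookkeeping on which the whole scheme rests.
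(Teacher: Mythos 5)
Your proposal follows the same route as the paper's proof: reformulate as the fixed-point equation (\ref{eq:Z-mild-form}) for $Z$, run the Picard iteration in the metric $\sup_{x_0}\E|\cdot|_{C_{x_0,\theta}}^p$, and obtain a contraction by combining the stochastic convolution bound with the Lipschitz continuity of $\sigma$ and of $\mathcal{M}$, taking the weight center at the integration point so the weight is $1$ there. Your description of the weight-centering bookkeeping and the bootstrap to arbitrary $T>0$ is exactly the argument the paper gives.
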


The proof of Theorem \ref{thm:exist-unique} is in Section \ref{S:existence}

\begin{corollary}
  Assume Assumptions \ref{assum:Dalang}, \ref{assum:sigma}, \ref{assum:f}, and \ref{assum:init-data}. If $\sigma$ is uniformly bounded, then there exists a unique solution to \eqref{eq:mild}.
\end{corollary}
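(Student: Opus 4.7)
The plan is to obtain both existence and uniqueness as immediate consequences of Theorem \ref{thm:exist-unique}, with no new analysis required. Existence is automatic: Assumptions \ref{assum:Dalang}, \ref{assum:sigma}, \ref{assum:f}, and \ref{assum:init-data} are in force by hypothesis, so Theorem \ref{thm:exist-unique} produces a mild solution $u$ satisfying the weighted estimate \eqref{eq:solution-bounds}. The only thing the corollary adds is that uniqueness can be upgraded from "within the class \eqref{eq:Lp-bounded}" to "among all adapted random field mild solutions".

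The key observation for uniqueness is that the integrability condition \eqref{eq:Lp-bounded} becomes vacuous when $\sigma$ is uniformly bounded. Write $M := \sup_{r \in \R} |\sigma(r)| < \infty$. Then for any adapted random field $u:[0,T] \times \R^d \times \Omega \to \R$, we have $|\sigma(u(t,x))| \leq M$ almost surely for every $(t,x)$, so that
\begin{equation*}
  \sup_{t \in [0,T]} \sup_{x \in \R^d} \E |\sigma(u(t,x))|^p \leq M^p < +\infty
\end{equation*}
automatically. Thus every mild solution of \eqref{eq:mild} lies in the uniqueness class from Theorem \ref{thm:exist-unique}, and the uniqueness assertion of that theorem immediately yields uniqueness without any side condition.

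Since both existence and uniqueness follow in a single line each from Theorem \ref{thm:exist-unique}, there is no genuine obstacle to overcome; the content of the corollary is entirely in the observation that the a priori hypothesis \eqref{eq:Lp-bounded} was only needed to control $\sigma(u)$, and boundedness of $\sigma$ removes the need for any control on $u$ itself.
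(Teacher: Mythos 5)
Your proposal is correct and follows exactly the same route as the paper: observe that uniform boundedness of $\sigma$ makes \eqref{eq:Lp-bounded} hold automatically for every adapted mild solution, so the uniqueness class in Theorem \ref{thm:exist-unique} is unrestricted, and existence already comes for free from that theorem. No discrepancies.
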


\begin{proof}
  If $u$ is a mild solution to \eqref{eq:mild} and $\sigma$ is bounded, then \eqref{eq:Lp-bounded} is always satisfied. Therefore, by Theorem \ref{thm:exist-unique}, the solution to the mild equation is unique.
\end{proof}

\section{Function spaces} \label{S:fnction-spaces}
Let $C_b:=C_b(\mathbb{R}^d)$ be the space of bounded continuous functions $\varphi: \mathbb{R}^d \to \mathbb{R}$ endowed with the supremum norm
\[|\varphi|_{C_b} : = \sup_{x \in \mathbb{R}^d} |\varphi(x)|.\]
For any $T>0$, let $C_b([0,T]\times \mathbb{R}^d)$ be the space of bounded continuous functions $\psi: [0,T]\times\mathbb{R}^d \to \mathbb{R}$ endowed with the supremum norm
\[|\psi|_{C_b([0,T]\times\mathbb{R}^d)}:= \sup_{t \in [0,T]}\sup_{x \in \mathbb{R}^d} |\psi(t,x)|.\]
Let $C_0:=C_0(\mathbb{R}^d)$ be the closed subset of $\varphi \in C_b$ such that
\[\lim_{|x| \to \infty} |\varphi(x)|=0\]
and for $T>0$ let $C_0([0,T]\times\mathbb{R}^d)$ be the closed subset of $\psi \in C_b([0,T]\times\mathbb{R}^d)$ such that
\[\lim_{|x| \to \infty}\sup_{t\in[0,T]} |\psi(t,x)| = 0.\]
These are all Banach spaces (see page 65 of \cite{conway}).

For $x_0 \in \mathbb{R}^d$ and $\theta\geq 0$ define the weighted space $C_{x_0,\theta}(\mathbb{R}^d)$ of continuous functions $\varphi: \mathbb{R}^d \to \mathbb{R}$ such that
\[\lim_{|x| \to \infty} \frac{ |\varphi(x)|}{1+|x-x_0|^\theta} = 0\]
and endow this space with the norm
\begin{equation*}
  |\varphi|_{C_{\theta,x_0}} := \sup_{x \in \mathbb{R}^d} \frac{ |\varphi(x)|}{1 + |x-x_0|^\theta}.
\end{equation*}
Similarly, for $T>0$, $x_0 \in \mathbb{R}^d$, and $\theta\geq 0$ we define $C_{x_0,\theta}([0,T]\times\mathbb{R}^d)$ to be the space of continuous functions $\psi:[0,T]\times \mathbb{R}^d \to \mathbb{R}$ such
\[\lim_{|x| \to \infty}\sup_{t \in [0,T]} \frac{|\psi(t,x)|}{1 + |x-x_0|^\theta} =0\]
and endow this space with the norm
\[|\psi|_{C_{x_0,\theta}([0,T]\times\mathbb{R}^d)} := \sup_{t \in [0,T]}\sup_{x \in \mathbb{R}^d} \frac{|\psi(t,x)|}{1 + |x-x_0|^\theta}.\]
We will call the point $x_0 \in \mathbb{R}^d$ in the above definitions the center of the weight.

These weighted spaces $C_{x_0,\theta}(\mathbb{R}^d)$ and $C_{x_0,\theta}([0,T]\times \mathbb{R}^d)$ are Banach spaces. Furthermore, for fixed $\theta>0$, the $C_{x_0,\theta}(\mathbb{R}^d)$ spaces coincide over all $x_0 \in \mathbb{R}^d$ and the $C_{x_0,\theta}([0,T]\times \mathbb{R}^d)$ coincide over all $x_0 \in \mathbb{R}^d$ because the $|\cdot|_{C_{x_0,\theta}}$ and $|\cdot|_{C_{x_1,\theta}}$ norms are equivalent for any fixed $x_0,x_1 \in \mathbb{R}^d$. On the other hand, it will not be sufficient to fix an $x_0$ (say $x_0=0$) and always use the $|\cdot|_{C_{0,\theta}}$ norm. Our results require that many estimates are uniform with respect to the center of the weights. For this reason it is convenient to denote the $x_0$ in the definition of the weighted spaces and their norms.

\subsection{The dual space of $C_0$ and subdifferentials}
Let $C_0^\star$ denote the dual space of $C_0=C_0(\mathbb{R}^d)$. The Riesz representation for this space is well-known.
\begin{proposition}[Theorem C.18 of \cite{conway}]
  The dual space of $C_0(\mathbb{R}^d)$, $C_0^\star(\mathbb{R}^d)$, is isomorphic to the set of signed regular finite Borel measures under the duality
  \begin{equation}
    \left<\varphi, \mu \right>_{C_0,C_0^\star} = \int_{\mathbb{R}^d} \varphi(x)\mu(dx).
  \end{equation}
  The norm of the dual space is the total variation norm,
  \[|\mu|_{C_0^\star}: = \sup_{|\varphi|_{C_0} \leq 1} \left< \varphi, \mu \right>_{C_0,C_0^\star}.\]

\end{proposition}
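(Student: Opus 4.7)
The plan is to reduce the statement to the classical Riesz--Markov--Kakutani representation theorem on a compact Hausdorff space by passing to the one-point compactification $X = \mathbb{R}^d \cup \{\infty\}$. Under this compactification, $C_0(\mathbb{R}^d)$ is isometrically identified with the closed codimension-one subspace $C_\infty(X) := \{f \in C(X) : f(\infty) = 0\}$ of $C(X)$, with each $\varphi \in C_0(\mathbb{R}^d)$ extended continuously to $X$ by setting $\varphi(\infty) := 0$.

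First, for any signed regular finite Borel measure $\mu$ on $\mathbb{R}^d$, the map $\Phi_\mu(\varphi) := \int_{\mathbb{R}^d} \varphi \, d\mu$ is easily checked to be an element of $C_0^\star$ with $|\Phi_\mu|_{C_0^\star} \leq |\mu|(\mathbb{R}^d)$, via $|\Phi_\mu(\varphi)| \leq |\varphi|_{C_0}\, |\mu|(\mathbb{R}^d)$. For surjectivity, given $\Phi \in C_0^\star$, I would apply Hahn--Banach to extend $\Phi$ to $\tilde{\Phi} \in C(X)^\star$ with $|\tilde{\Phi}| = |\Phi|_{C_0^\star}$. The classical Riesz--Markov--Kakutani theorem on the compact Hausdorff space $X$ then produces a unique signed regular finite Borel measure $\tilde{\mu}$ on $X$ such that $\tilde{\Phi}(f) = \int_X f \, d\tilde{\mu}$ for all $f \in C(X)$. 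Define $\mu := \tilde{\mu}|_{\mathbb{R}^d}$, which is a signed regular finite Borel measure on $\mathbb{R}^d$; for every $\varphi \in C_0(\mathbb{R}^d)$, since the continuous extension vanishes at $\infty$, one gets $\Phi(\varphi) = \tilde{\Phi}(\varphi) = \int_X \varphi \, d\tilde{\mu} = \int_{\mathbb{R}^d} \varphi \, d\mu$.

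To match norms, I would prove $|\Phi|_{C_0^\star} \geq |\mu|(\mathbb{R}^d)$. Let $h := d\mu/d|\mu|$ be the Radon--Nikodym density, of absolute value $1$ almost everywhere with respect to $|\mu|$. Using outer regularity of $|\mu|$ together with Lusin's theorem, for each $\varepsilon > 0$ I would construct $\varphi_\varepsilon \in C_c(\mathbb{R}^d) \subset C_0(\mathbb{R}^d)$ with $|\varphi_\varepsilon|_{C_0} \leq 1$ and $|\mu|(\{\varphi_\varepsilon \neq h\}) < \varepsilon$, which yields $\Phi(\varphi_\varepsilon) = \int \varphi_\varepsilon \, d\mu \geq |\mu|(\mathbb{R}^d) - 2\varepsilon$. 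Letting $\varepsilon \to 0$ and combining with the reverse bound above gives the norm identity. Uniqueness of the representing $\mu$ follows because two signed regular finite Borel measures on $\mathbb{R}^d$ that induce identical functionals on $C_c(\mathbb{R}^d)$ must coincide, by a standard $\pi$-$\lambda$ / monotone class argument leveraging outer regularity on open sets.

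The main obstacle I anticipate is the well-definedness of the association $\Phi \mapsto \mu$ independent of the Hahn--Banach extension chosen: two different extensions of $\Phi$ can produce different measures $\tilde{\mu}$ on $X$ that differ in the mass they place at the added point $\infty$, yet their restrictions to $\mathbb{R}^d$ must agree because they induce the same functional on $C_0(\mathbb{R}^d)$. Verifying this carefully—hence that $\mu \mapsto \Phi_\mu$ and $\Phi \mapsto \tilde{\mu}|_{\mathbb{R}^d}$ are mutually inverse—rests precisely on the regularity-based uniqueness just noted, and is the one delicate point where one must exploit more than the soft functional-analytic setup.
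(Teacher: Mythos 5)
The paper cites this proposition verbatim from Conway's text (Theorem C.18) and supplies no proof, so there is no argument in the paper to compare against. Your proposal is a correct and complete proof by the standard route: pass to the one-point compactification $X = \mathbb{R}^d \cup \{\infty\}$ to identify $C_0(\mathbb{R}^d)$ with the hyperplane $\{f \in C(X): f(\infty)=0\}$, extend a given $\Phi \in C_0^\star$ by Hahn--Banach, apply the Riesz--Markov--Kakutani theorem on the compact space $X$, and restrict the resulting measure to $\mathbb{R}^d$; the norm identity then follows from a Lusin approximation of the Radon--Nikodym density $h = d\mu/d|\mu|$ by functions in $C_c(\mathbb{R}^d)$. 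The well-definedness point you flag is genuine and your resolution via uniqueness of regular representing measures is sound; a slightly quicker way to see it is that any two bounded extensions of $\Phi$ to $C(X)^\star$ differ by a scalar multiple of the evaluation functional at $\infty$, whose Riesz measure is supported at the added point and therefore disappears upon restriction to $\mathbb{R}^d$.
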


\begin{definition}[Subdifferential]
For any $v \in C_0(\mathbb{R}^d)$, the subdifferential is defined to be the subset of the dual space (see \cite[Appendix D]{dpz})
  \begin{equation} \label{eq:subdiff}
    \partial|v|_{C_0}: = \left\{\mu \in C_0^\star: \left<v,\mu\right>_{C_0,C_0^\star} = |v|_{C_0}, |\mu|_{C_0^\star} = 1 \right\}.
  \end{equation}
\end{definition}
\begin{proposition} \label{prop:subdff}
If $\mu \in \partial |v|_{C_0}$, then $\mu$ is a signed measure of the form $\mu(dx) = \sgn(v(x))\tilde{\mu}(dx)$ where $\tilde{\mu}$ is a positive measure supported on the set $\{x \in \mathbb{R}^d: |v(x)| = |v|_{C_0}\}$ and satisfying $\tilde{\mu}(\mathbb{R}^d) = 1$.
\end{proposition}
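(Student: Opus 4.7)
The plan is to exploit the Riesz representation just recalled and combine it with the Jordan decomposition of a signed measure. By that representation every $\mu \in \partial|v|_{C_0}$ is a regular finite signed Borel measure with $|\mu|_{C_0^\star} = \mu^+(\mathbb{R}^d) + \mu^-(\mathbb{R}^d) = 1$, where $\mu = \mu^+ - \mu^-$ is the Jordan decomposition into mutually singular positive measures. Writing $M := |v|_{C_0}$ and $\mu_{\pm}:=\mu^\pm(\mathbb{R}^d)$, the defining identity $\langle v,\mu\rangle_{C_0,C_0^\star} = M$ becomes
\begin{equation*}
  M \;=\; \int_{\mathbb{R}^d} v \, d\mu^+ - \int_{\mathbb{R}^d} v\, d\mu^- \;\leq\; M\mu_+ + M\mu_- \;=\; M.
\end{equation*}
Thus both inequalities are equalities, i.e.\ $\int (M-v)\,d\mu^+ = 0$ and $\int (M+v)\,d\mu^- = 0$.

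Next I would exploit the continuity of $v$ and the non-negativity of the integrands $M-v$ and $M+v$ to pin down the supports. Since $\mu^+$ is a positive regular Borel measure and $M-v$ is a non-negative continuous function whose integral against $\mu^+$ vanishes, a standard argument (if $M - v(x_0) > 0$ at some $x_0 \in \mathrm{supp}(\mu^+)$, then $M-v>0$ on an open neighborhood of $x_0$, which carries positive mass) shows $\mathrm{supp}(\mu^+) \subset \{x : v(x) = M\}$. Symmetrically, $\mathrm{supp}(\mu^-) \subset \{x : v(x) = -M\}$. In particular $\mu^+$ and $\mu^-$ are automatically mutually singular (when $M>0$), and both are concentrated on $\{|v| = M\}$.

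Finally I would define the positive measure $\tilde\mu := \mu^+ + \mu^-$, which satisfies $\tilde\mu(\mathbb{R}^d) = \mu_+ + \mu_- = 1$ and is supported on $\{|v| = M\}$. On the support of $\mu^+$ one has $v = M > 0$, so $\sgn(v) = 1$, while on the support of $\mu^-$ one has $v = -M < 0$, so $\sgn(v) = -1$. Consequently $\mu = \mu^+ - \mu^- = \sgn(v)\,\tilde\mu$ as a set-theoretic decomposition of the measure, which is exactly the claimed representation. (The degenerate case $M = 0$ would need a separate remark, but it is excluded in all subsequent applications of the proposition.)

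I expect no serious obstacle here; the only subtle point is the passage from $\int(M-v)\,d\mu^+ = 0$ to support information, which uses continuity of $v$ together with regularity of $\mu^+$, and the clean identification of the Jordan components with the positive and negative parts of $\sgn(v)\,\tilde\mu$.
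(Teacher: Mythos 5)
Your proof is correct and, if anything, more complete than the paper's: the paper only verifies the converse (that measures of the stated form do lie in $\partial|v|_{C_0}$) and asserts the forward implication without argument, whereas you actually prove the forward implication via the Jordan decomposition $\mu=\mu^+-\mu^-$, the identity $1=|\mu|_{C_0^\star}=\mu^+(\mathbb{R}^d)+\mu^-(\mathbb{R}^d)$, and saturation of the pairing inequality $\langle v,\mu\rangle\le M$ forcing $\mu^\pm$ onto $\{v=\pm M\}$. This is the natural way to substantiate the paper's assertion and is in the same spirit as the paper's sketch; your side note that $M=0$ is a degenerate case to be excluded is also appropriate.
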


\begin{proof}
  If $\mu$ is of the form described above then
  \[|\mu|_{C_0^\star} = 1\] and
  \[\left< v, \mu \right>_{C_0,C_0^\star} = \int_{\mathbb{R}^d} v(x)\sgn(v(x)) \tilde{\mu}(dx) = \int_{\mathbb{R}^d} |v(x)|\tilde{\mu}(dx) = |v|_{C_0}.\]

  On the other hand, if $|\mu|_{C_0^\star}=1$ is not of the form described above, then
  \[\left<v,\mu\right>_{C_0,C_0^\star}< |v|_{C_0}.\]
\end{proof}

The subdifferential can be used to bound the upper-left-derivative  of a function of space and time.
\begin{definition}
  For a real-valued function $\varphi: \mathbb{R} \to \mathbb{R}$, the upper-left-derivative is defined to be
  \begin{equation} \label{eq:left-deriv}
    \frac{d^-}{dt} \varphi(t) = \limsup_{h \downarrow 0} \frac{\varphi(t) - \varphi(t-h)}{h}.
  \end{equation}
\end{definition}

The next proposition is similar to Proposition D.4 of \cite{dpz}, but does not require that the mapping $t \mapsto v(t,\cdot)$ be differentiable in the $C_0$ norm. We do not expect functions $v:[0,T]\times\mathbb{R}^d \to \mathbb{R}$ that have continuous first derivatives to be continuous in such a strong sense.
\begin{proposition} \label{prop-left-deriv-subdiff}
Let $v:[0,T]\times \mathbb{R}^d \to \mathbb{R}$ be a function that has continuous first derivatives and satisfies $v(t,\cdot) \in C_0$ for any $t \in [0,T]$. For any $t \in (0,T]$ and $\mu \in \partial|v(t,\cdot)|_{C_0}$ defined in \eqref{eq:subdiff}, the upper-left-derivatives of the mapping $t \mapsto |v(t,\cdot)|_{C_0}$ can be bounded by
\[\frac{d^-}{dt}|v(t,\cdot)|_{C_0} \leq \left<\frac{\partial v}{\partial t}(t,\cdot), \mu \right>_{C_0,C_0^\star}.\]
\end{proposition}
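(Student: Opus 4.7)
The strategy is to derive a backward finite-difference inequality for $|v(t,\cdot)|_{C_0}$ using the two defining properties of $\mu$ in \eqref{eq:subdiff}, and then pass to the limit $h\downarrow 0$, with Proposition \ref{prop:subdff} providing the structural control needed to interchange limit and integration against $\mu$.

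For any $h\in(0,t)$, the identity $\langle v(t,\cdot),\mu\rangle_{C_0,C_0^\star}=|v(t,\cdot)|_{C_0}$ from the definition of the subdifferential, combined with the bound $\langle v(t-h,\cdot),\mu\rangle_{C_0,C_0^\star}\leq |v(t-h,\cdot)|_{C_0}\cdot|\mu|_{C_0^\star}=|v(t-h,\cdot)|_{C_0}$, yields
\begin{equation*}
\frac{|v(t,\cdot)|_{C_0}-|v(t-h,\cdot)|_{C_0}}{h}\leq \left\langle \frac{v(t,\cdot)-v(t-h,\cdot)}{h},\mu\right\rangle_{C_0,C_0^\star}.
\end{equation*}
Taking $\limsup_{h\downarrow 0}$ of the left-hand side produces exactly $\frac{d^-}{dt}|v(t,\cdot)|_{C_0}$, so it remains to identify the corresponding limit on the right.

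By the mean value theorem in the time variable and continuity of $\partial_t v$, the difference quotient converges pointwise to $\partial_t v(t,x)$ as $h\downarrow 0$. To pass this limit through the integration against $\mu$, I would invoke Proposition \ref{prop:subdff}: $\mu(dx)=\sgn(v(t,x))\tilde\mu(dx)$, where $\tilde\mu$ is a probability measure supported on the closed level set $S_t:=\{x\in\R^d:|v(t,x)|=|v(t,\cdot)|_{C_0}\}$. Because $v(t,\cdot)\in C_0$ and $|v(t,\cdot)|_{C_0}>0$, the set $S_t$ is compact. Fix any small $h_0\in(0,t)$; then $\partial_t v$ is uniformly continuous and bounded on the compact cylinder $[t-h_0,t]\times S_t$, so the mean value theorem delivers uniform convergence of the difference quotients to $\partial_t v(t,\cdot)$ on $S_t$ together with a uniform bound there. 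Bounded convergence with respect to the probability measure $\tilde\mu$ then yields the desired interchange, completing the argument.

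The main obstacle is precisely this interchange of limit and integration, since pointwise convergence of the difference quotients across $\R^d$ does not by itself allow passage under $\mu$; the crucial input is the identification of the support of $\mu$ with the compact level set $S_t$. The degenerate case $v(t,\cdot)\equiv 0$, where the subdifferential is the entire unit sphere of $C_0^\star$, can be treated separately: the left-hand side is nonpositive because $t$ is a minimizer of $s\mapsto|v(s,\cdot)|_{C_0}$, and the finite-difference inequality can still be passed to the limit under the mild additional assumption (satisfied in the applications in Section \ref{S:M}) that $\partial_t v(t,\cdot)\in C_0$.
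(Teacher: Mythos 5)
Your proof is correct but arrives at the bound by a different route than the paper. You begin by applying the subdifferential duality to the finite difference, obtaining
\[
\frac{|v(t,\cdot)|_{C_0}-|v(t-h,\cdot)|_{C_0}}{h}\leq \left\langle \frac{v(t,\cdot)-v(t-h,\cdot)}{h},\mu\right\rangle_{C_0,C_0^\star},
\]
and then you must pass the limit $h\downarrow 0$ through the integral against $\mu$, for which you invoke the structure of $\mu$ from Proposition \ref{prop:subdff}, the compactness of the level set $S_t$ (using $v(t,\cdot)\in C_0$, $|v(t,\cdot)|_{C_0}>0$), and uniform continuity of $\partial_t v$ on a compact cylinder to get uniform convergence of the difference quotients on $S_t$. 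The paper sidesteps this interchange entirely by reversing the order of operations: it first fixes an arbitrary extremizer $x_0$ of $v(t,\cdot)$, bounds the difference quotient of the norm by the \emph{pointwise} difference quotient $\frac{v(t,x_0)-v(t-h,x_0)}{h}\sgn(v(t,x_0))$, takes the limsup (which is then just the pointwise derivative $\partial_t v(t,x_0)\sgn(v(t,x_0))$, no interchange required), and only afterward integrates the resulting $x_0$-pointwise inequality against $\tilde\mu$ — which is trivial because the left-hand side does not depend on $x_0$ and $\tilde\mu$ is a probability measure. What the paper's ordering buys is the elimination of the measure-theoretic interchange step; what your ordering buys is perhaps a more direct use of the duality pairing, at the cost of the compactness/uniform-continuity argument. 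Your explicit flagging of the degenerate case $v(t,\cdot)\equiv 0$, where the subdifferential is the whole unit sphere of $C_0^\star$ and Proposition \ref{prop:subdff} technically fails, is a point the paper glosses over; your handling of it requires an extra assumption on $\partial_t v(t,\cdot)$, which is a reasonable caveat to note, and neither proof fully resolves that edge case.
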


\begin{proof}
  Fix $t_0 \in (0,T]$. Let $x_0 \in \mathbb{R}^d$ be such that $x_0$ is a global maximizer or minimizer for $v(t_0,\cdot)$ in the sense that
  \[|v(t_0,x_0)| = v(t_0,x_0)\sgn(v(t_0,x_0)) = |v(t_0,\cdot)|_{C_0}.\]
  Observe that because $|\cdot|_{C_0}$ is a supremum norm, for any $h \in (0,t_0)$, we have the simple inequality
  \[|v(t_0-h,\cdot)|_{C_0}\geq v(t_0-h,x_0)\sgn(v(t_0,x_0)).\]
  Therefore,
  \begin{align} \label{eq:left-deriv-at-x_0}
    &\frac{d^-}{dt}|v(t,\cdot)|_{C_0} \Bigg|_{t = t_0} \nonumber \\
    &= \limsup_{h \downarrow 0} \frac{|v(t_0,\cdot)|_{C_0} - |v(t_0-h,\cdot)|_{C_0}}{h} \nonumber\\
    &\leq \limsup_{h \downarrow 0} \left(\frac{v(t_0,x_0) - v(t_0-h,x_0)}{h}\right)\sgn(v(t_0,x_0))\nonumber\\
    & \leq \frac{\partial v}{\partial t} (t_0,x_0)\sgn(v(t_0,x_0)).
  \end{align}
  If $\mu \in \partial |v(t_0,\cdot)|_{C_0}$, then by Proposition \ref{prop:subdff} $\mu(dx)= \sgn(v(t_0,x))\tilde{\mu}(dx)$ where $\tilde{\mu}$ is a positive unit measure supported on the set of $x_0$ for which \eqref{eq:left-deriv-at-x_0} holds. Then integrating both sides of \eqref{eq:left-deriv-at-x_0} against $\tilde{\mu}$ we conclude that
  \begin{equation}
    \frac{d^-}{dt} |v(t,\cdot)| \leq \left<\frac{\partial v}{\partial t}(t,\cdot), \mu \right>_{C_0,C_0^\star}.
  \end{equation}
\end{proof}

\subsection{Fractional Sobolev spaces}
  Given a bounded, open domain $D \subset \mathbb{R}^d$ with smooth boundary, $s \in (0,1)$ and $p\geq 1$, the fractional Sobolev space $W^{s,p}(D)$ is defined to be the Banach space of functions on $D$ endowed with  the norm
  \begin{equation} \label{eq:frac-Sobolev-def}
    |\varphi|_{W^{s,p}(D)}^p:= \int_D |\varphi(x)|^p dx + \int_D \int_D \frac{|\varphi(x) - \varphi(y)|^p}{|x-y|^{d+ sp}}dxdy.
  \end{equation}

  The H\"older spaces $C^\theta(D)$, $\theta \in (0,1)$ are the Banach spaces of functions on $D$ endowed with the norm
  \begin{equation} \label{eq:Holder-norm-def}
    |\varphi|_{C^\theta(D)} := \sup_{x \in D} |\varphi(x)| + \sup_{\substack{x,y \in D\\x \not= y }} \frac{|\varphi(x) - \varphi(y)|}{|x-y|^\theta}.
  \end{equation}

  \begin{proposition}[Fractional Sobolev embedding theorem (Theorem 8.2 of \cite{sobolev})] \label{prop:Sobolev-embedding}
    If $s \in (0,1)$ and $sp>d$, then $W^{s,p}(D)$ embeds continuously into $C^\theta(D)$ where $\theta= \frac{sp-d}{p}$ and there exists $C>0$ such that for all $\varphi \in W^{s,p}(D)$,
    \begin{equation} \label{eq:Sobolev-embedding}
      |\varphi|_{C^\theta(D)} \leq C |\varphi|_{W^{s,p}(D)}.
    \end{equation}
  \end{proposition}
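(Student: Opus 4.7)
The overall strategy is a Morrey--Campanato-type argument: control the oscillation of $\varphi$ over balls in terms of the Gagliardo double integral appearing in $|\varphi|_{W^{s,p}(D)}$, then invoke the Campanato characterization of H\"older spaces to obtain continuity. Since $\partial D$ is smooth, I would first extend $\varphi$ to $\tilde\varphi \in W^{s,p}(\R^d)$ with $|\tilde\varphi|_{W^{s,p}(\R^d)} \le C|\varphi|_{W^{s,p}(D)}$ via a standard extension theorem, and then work with full balls $B_r \subset \R^d$ to avoid boundary complications.

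The core estimate is a fractional Poincar\'e inequality on balls: for any $B_r \subset \R^d$,
\begin{equation*}
\int_{B_r} |\tilde\varphi(y) - \bar{\tilde\varphi}_{B_r}|^p dy \le C r^{sp} \int_{B_r}\int_{B_r} \frac{|\tilde\varphi(y) - \tilde\varphi(z)|^p}{|y-z|^{d+sp}} dy dz,
\end{equation*}
where $\bar{\tilde\varphi}_{B_r}$ denotes the mean of $\tilde\varphi$ over $B_r$. This is obtained by writing $\tilde\varphi(y) - \bar{\tilde\varphi}_{B_r} = |B_r|^{-1}\int_{B_r}(\tilde\varphi(y)-\tilde\varphi(z))dz$, applying Jensen to pass to the $p$-th power inside the integral, and then using $|y-z| \le 2r$ to insert the weight $|y-z|^{d+sp}$ against an offsetting factor $(2r)^{d+sp}$. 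Dividing by $|B_r| \sim r^d$ and recalling that $sp - d = \theta p$ yields the Campanato bound
\begin{equation*}
\frac{1}{|B_r|}\int_{B_r} |\tilde\varphi(y) - \bar{\tilde\varphi}_{B_r}|^p dy \le C r^{\theta p} |\varphi|_{W^{s,p}(D)}^p.
\end{equation*}

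With this Campanato bound in hand, the embedding into $C^\theta$ follows from the classical Campanato--Meyers characterization of H\"older spaces: whenever the mean oscillation on every ball of radius $r$ is dominated by $r^{\theta p}$ in the above sense, the function agrees a.e.\ with a H\"older continuous function of exponent $\theta$. The standard proof is a telescoping estimate $|\bar{\tilde\varphi}_{B_{2^{-k-1}r}(x)} - \bar{\tilde\varphi}_{B_{2^{-k}r}(x)}| \le C 2^{-k\theta} r^\theta$, which shows that $\bar{\tilde\varphi}_{B_r(x)}$ is Cauchy as $r \downarrow 0$ at every Lebesgue point, identifies the continuous representative, and then compares mean values of balls centered at two nearby points $x,y$ with $r \sim |x-y|$ to obtain $|\tilde\varphi(x) - \tilde\varphi(y)| \le C|x-y|^\theta |\varphi|_{W^{s,p}(D)}$. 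The $L^\infty$-bound comes by combining $|\tilde\varphi(x)| \le |\tilde\varphi(x) - \bar{\tilde\varphi}_{B_1(x)}| + |\bar{\tilde\varphi}_{B_1(x)}|$ with the already-established H\"older estimate at scale $r=1$ and the trivial bound $|\bar{\tilde\varphi}_{B_1(x)}| \le C|\tilde\varphi|_{L^p(\R^d)}$.

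The main technical obstacle is the Campanato characterization itself, whose telescoping argument must simultaneously construct the continuous representative at every Lebesgue point and establish the H\"older estimate between distinct points; this is where the hypothesis $sp > d$ enters essentially, through the summability of $\sum 2^{-k\theta}$. The extension theorem and the fractional Poincar\'e inequality are comparatively routine: the former is a well-known consequence of the smoothness of $\partial D$, and the latter reduces to Jensen's inequality and the trivial bound $|y-z| \le 2r$ as indicated.
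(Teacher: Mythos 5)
The paper does not give a proof of this Proposition; it is quoted as a cited result (Theorem 8.2 of the Hitchhiker's guide to fractional Sobolev spaces). Your Morrey--Campanato argument is correct and is in fact essentially the argument used in that reference: extend across the smooth boundary to $W^{s,p}(\R^d)$, derive the Campanato oscillation bound
\[
\frac{1}{|B_r|}\int_{B_r}|\tilde\varphi-\bar{\tilde\varphi}_{B_r}|^p \le C r^{\theta p}[\tilde\varphi]_{W^{s,p}}^p
\]
by Jensen plus the trivial inequality $1\le (2r)^{d+sp}|y-z|^{-(d+sp)}$ on $B_r\times B_r$, and then invoke the Campanato--Meyers isomorphism (dyadic telescoping, Cauchy at Lebesgue points, comparison of averages at nearby centers, plus the $L^\infty$ bound from the $L^p$ control of $\bar{\tilde\varphi}_{B_1(x)}$). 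All the steps you identify as routine are indeed routine, and the condition $sp>d$ enters exactly where you say, through $\theta>0$ and the convergence of $\sum_k 2^{-k\theta}$. This is a complete and correct proof of the cited embedding.
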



\section{The Kolmogorov Theorem on unbounded domains} \label{S:Kolmogorov}
\begin{theorem} \label{thm:new_Kolmo}
  Let $(\Omega, \mathcal{F}, \Pro)$ be a probability space. Let $d \in \mathbb{N}$, $p>d$ and $\gamma \in (0,1)$ such that $p\gamma>d$. Let $\theta> \frac{p\gamma}{p-d}$. There exists $C_{p,d,\gamma,\theta}>0$ such that
  for any  random field $X: \mathbb{R}^d \times \Omega \to \mathbb{R}$ satisfying for some $A>0$ and $x_0 \in \mathbb{R}^d$
  \begin{equation} \label{eq:Kolmo-assum-x0}
    \E|X(x_0)|^p \leq A
  \end{equation}
  and for all $x_1, x_2 \in \mathbb{R}^d$
  \begin{equation} \label{eq:Kolmo-assum-diff}
    \E|X(x_1) - X(x_2)|^p\leq A|x_1 - x_2|^{p \gamma},
  \end{equation}
  this random field is almost surely continuous and satisfies
  \begin{equation} \label{eq:Kolmogorov}
    \E\left( \sup_{x \in \mathbb{R}^d} \frac{|X(x)|}{1 + |x-x_0|^{\theta}} \right)^p \leq C_{p,d,\gamma,\theta} A.
  \end{equation}
  Furthermore,
  \begin{equation} \label{eq:X-in-C-theta}
    \Pro\left(\lim_{|x| \to \infty} \frac{|X(x)|}{1 + |x-x_0|^\theta} = 0 \right)=1.
  \end{equation}
  Therefore, $X \in C_{x_0,\theta}(\mathbb{R}^d)$ with probability one.
\end{theorem}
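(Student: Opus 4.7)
The plan is to combine the fractional Sobolev embedding from Proposition \ref{prop:Sobolev-embedding} on dyadic balls around $x_0$ with a weighted summation argument. Since $p\gamma>d$, fix $s \in (d/p,\gamma)$ and set $\theta_1 := s - d/p \in (0,1)$. For each integer $n \geq 0$ let $B_n := \{x \in \mathbb{R}^d : |x-x_0|\leq 2^n\}$ and define the rescaled field $\tilde X_n(y) := X(x_0 + 2^n y)$ on the unit ball $B(0,1)$. Hypotheses \eqref{eq:Kolmo-assum-x0}--\eqref{eq:Kolmo-assum-diff} transform under this rescaling into $\E|\tilde X_n(0)|^p \leq A$ and $\E|\tilde X_n(y) - \tilde X_n(z)|^p \leq A \cdot 2^{np\gamma}|y-z|^{p\gamma}$. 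A direct computation using these bounds, the triangle inequality, and the fact that $\int_{B(0,1)}\int_{B(0,1)}|y-z|^{p\gamma - d - sp}\,dy\,dz$ is finite (because $s<\gamma$) yields
\begin{equation*}
\E|\tilde X_n|^p_{W^{s,p}(B(0,1))} \leq C A(1+2^{np\gamma})
\end{equation*}
with a constant $C$ independent of $n$. Applying Proposition \ref{prop:Sobolev-embedding} produces a continuous modification of $\tilde X_n$, hence of $X$ on each $B_n$ and therefore on $\mathbb{R}^d$, together with
\begin{equation*}
\E\sup_{x \in B_n}|X(x)|^p \leq C A(1+2^{np\gamma}).
\end{equation*}

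Next, decompose $\mathbb{R}^d = B_0 \cup \bigcup_{n \geq 1}(B_n \setminus B_{n-1})$. On the shell $B_n \setminus B_{n-1}$ the weight satisfies $1 + |x-x_0|^\theta \geq 2^{(n-1)\theta}$, so
\begin{equation*}
\sup_{x \in \mathbb{R}^d}\frac{|X(x)|}{1+|x-x_0|^\theta} \leq \sup_{x \in B_0}|X(x)| + \sum_{n \geq 1}2^{-(n-1)\theta}\sup_{x \in B_n}|X(x)|.
\end{equation*}
Applying Minkowski's inequality in $L^p(\Omega)$ together with the previous bound gives
\begin{equation*}
\left(\E\left(\sup_{x \in \mathbb{R}^d}\frac{|X(x)|}{1+|x-x_0|^\theta}\right)^p\right)^{1/p} \leq C A^{1/p}\left(1 + \sum_{n\geq 1}2^{-(n-1)\theta}(1+2^{n\gamma})\right),
\end{equation*}
and the series converges because the hypothesis $\theta > p\gamma/(p-d)$ implies in particular $\theta > \gamma$. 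This establishes \eqref{eq:Kolmogorov}.

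For the almost sure decay \eqref{eq:X-in-C-theta}, Markov's inequality applied shell by shell gives, for every $\e>0$,
\begin{equation*}
\Pro\left(\sup_{|x-x_0| \geq 2^N}\frac{|X(x)|}{1+|x-x_0|^\theta} > \e\right) \leq \e^{-p}\sum_{n \geq N+1}\E\sup_{x \in B_n \setminus B_{n-1}}\frac{|X(x)|^p}{(1+|x-x_0|^\theta)^p},
\end{equation*}
and the right-hand side is the convergent geometric tail from above, which vanishes as $N \to \infty$. A Borel-Cantelli argument applied along a sequence $\e_k \downarrow 0$ then produces the almost sure statement.

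The main technical obstacle is the first step: choosing $s$ so that simultaneously $sp > d$ (needed for the Sobolev embedding into continuous functions) and $s < \gamma$ (needed for integrability of the Sobolev double integral), and then verifying via the rescaling that the Sobolev embedding constant on $B(0,1)$ can be used with a factor independent of the scale parameter $2^n$.
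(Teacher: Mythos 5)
Your proposal is correct, and it takes a genuinely different route from the paper. The paper compresses all of $\mathbb{R}^d$ into the unit ball by a single nonlinear change of variables, defining $Y(z) = (1-|z|)\,X\bigl(x_0 + z(1-|z|)^{-1/\theta}\bigr)$, so that the weight $(1+|x-x_0|^\theta)^{-1}$ is absorbed into the Jacobian factor $(1-|z|)$; it then applies the fractional Sobolev embedding once on $\overline{B(0,1)}$. The price is a somewhat delicate three-part estimate of $\E|Y(z_1)-Y(z_2)|^p$ and the choice of an auxiliary exponent $\delta=\min\{1,\tfrac1\gamma-\tfrac1\theta\}$, whose positivity is exactly what forces the hypothesis $\theta>\frac{p\gamma}{p-d}$. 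You instead decompose $\mathbb{R}^d$ into dyadic balls, rescale each $B_n$ to the unit ball so the standard Sobolev embedding constant can be reused with no $n$-dependence, and sum the shell contributions with weights $2^{-(n-1)\theta}$. The two ingredients are the same (fractional Sobolev embedding plus the homogeneity in the Kolmogorov hypothesis), but your argument is more modular and in fact establishes the conclusion under the weaker hypothesis $\theta>\gamma$, since that is all the geometric series needs; the paper's threshold $\theta>\frac{p\gamma}{p-d}$, which is strictly larger, is an artifact of its particular change of variables. Two small remarks: the final step is really just a tail estimate combined with a union bound over $\e_k\downarrow 0$, so invoking Borel--Cantelli is slight overkill (and the paper in fact avoids it entirely by comparing against the bound at a slightly smaller exponent $\tilde\theta\in(\gamma,\theta)$); and, as in the paper, one should note that a jointly measurable version of $X$ is needed before the $W^{s,p}$-norm can be treated pathwise and Fubini applied, which is standard.
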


\begin{proof}
  The proof is based on a Kolmogorov continuity theorem result, which is a consequence of fractional Sobolev embeddings.

  Let $\theta> \frac{p \gamma}{p-d}$. Let $B(0,1) = \{z \in \mathbb{R}^d: |z|< 1\}$ be the open unit ball and $\overline{B(0,1)}$ denote the closed unit ball. Define the random field $Y:\overline{B(0,1)}\times \Omega \to \mathbb{R}$ by
  \begin{equation}\label{eq:Y-def}
    Y(z) =
    \begin{cases}
      (1-|z|) X \left(x_0 + z(1-|z|)^{-\frac{1}{\theta}} \right) & \text{ if } |z|<1\\
      0 & \text{ if } |z|=1.
    \end{cases}
  \end{equation}

  In the following, the constant $C$ may change from line to line, but it only depends on $\gamma$, $p$, $d$, and $\theta$.

  Notice that the function $\rho: {B(0,1)} \to \mathbb{R}^d$ given by $$\rho(z) := x_0 + z(1-|z|)^{-\frac{1}{\theta}}$$
  maps $B(0,1)$ one-to-one and onto $\mathbb{R}^d$. You can see this by observing that the real-valued function $t \mapsto t(1 - t)^{-\frac{1}{\theta}}$ is strictly increasing and maps $[0,1)$ onto $[0,+\infty)$.



  We show that the random field $Y$ defined in \eqref{eq:Y-def} satisfies a Kolmogorov continuity criterion.

  If $z_1, z_2 \in B(0,1)$, then
  \begin{align*}
    \E&|Y(z_1) - Y(z_2)|^p\\
    \leq &C |(1 - |z_1|) - (1-|z_2|)|^p \E |X(x_0 + z_1(1-|z_1|)^{-\frac{1}{\theta}})|^p\\
    &+ C (1- |z_2|)^p \E |X(x_0 + z_1(1-|z_1|)^{-\frac{1}{\theta}}) - X(x_0 + z_2(1-|z_1|)^{-\frac{1}{\theta}})|^p\\
    &+ C(1- |z_2|)^p \E |X(x_0 + z_2(1-|z_1|)^{-\frac{1}{\theta}})- X(x_0 + z_2(1-|z_2|)^{-\frac{1}{\theta}}) |^p\\
    =:& I_1 + I_2 +I_3.
  \end{align*}
  Without loss of generality, we can assume $1-|z_2| \leq 1 - |z_1|$.

  Using the fact that $\frac{p\gamma}{\theta}< p-d<p$,
  \[|(1 - |z_1|)- (1 - |z_2|)|^p \leq (2(1-|z_1|))^{\frac{p\gamma}{\theta}} ||z_1| - |z_2||^{p-\frac{ p\gamma}{\theta}}, \]
  and we see that
  \begin{equation*}
    I_1 \leq C(1 - |z_1|)^{\frac{p\gamma}{\theta} }|z_1-z_2|^{p-\frac{p\gamma}{\theta}}\E|X(x_0 + z_1(1-|z_1|)^{-\frac{1}{\theta}})|^p.
  \end{equation*}
  We can bound the expectation by
  \begin{align*}
    &\E|X(x_0 + z_1(1-|z_1|)^{-\frac{1}{\theta}})|^p \\
    &\leq C\E|X(x_0 + z_1(1-|z_1|)^{-\frac{1}{\theta}})-X(x_0)|^p + C\E|X(x_0)|^p.
  \end{align*}
   It follows from \eqref{eq:Kolmo-assum-x0} and \eqref{eq:Kolmo-assum-diff} that
  \[\E|X(x_0 + z_1(1-|z_1|)^{-\frac{1}{\theta}})|^p \leq CA (1 + |z_1|^{p \gamma} (1 - |z_1|)^{-\frac{p\gamma}{\theta}}).\]
  Therefore,
  \begin{equation}\label{eq:I_1-Kolmo}
   I_1 \leq CA (1+(1 - |z_1|)^{\frac{p\gamma}{\theta} - \frac{p\gamma}{\theta}}) |z_1-z_2|^{p-\frac{p\gamma}{\theta}} \leq CA |z_1 - z_2|^{p - \frac{p\gamma}{\theta}}.
  \end{equation}

  We now estimate $I_2$. By \eqref{eq:Kolmo-assum-diff},
  \[\E |X(x_0 + z_1(1-|z_1|)^{-\frac{1}{\theta}}) - X(x_0 + z_2(1-|z_1|)^{-\frac{1}{\theta}})|^p \leq CA |z_1-z_2|^{\gamma p} (1-|z_1|)^{-\frac{p \gamma}{\theta}}.\]
  We can use the assumption that
   $(1-|z_2|)\leq (1-|z_1|)$ and $\frac{p\gamma}{\theta}<p$, to see that
  $$(1-|z_2|)^p(1-|z_1|)^{-\frac{p\gamma}{\theta}} \leq 1$$
   and therefore
  \begin{equation} \label{eq:I_2-Kolmo}
    I_2 \leq CA |z_1 - z_2|^{p \gamma}.
  \end{equation}

  Now we estimate $I_3$. By \eqref{eq:Kolmo-assum-diff},
  \begin{align*}
    &\E |X(x_0 + z_2(1-|z_1|)^{-\frac{1}{\theta}})- X(x_0 + z_2(1-|z_2|)^{-\frac{1}{\theta}}) |^p\\
    &\leq CA|z_2|^{p \gamma}  \left|(1-|z_1|)^{-\frac{1}{\theta}} - (1-|z_2|)^{-\frac{1}{\theta}} \right|^{p \gamma}
  \end{align*}
  Because we are assuming that $(1-|z_2|)\leq (1-|z_1|)$, we can see that for $\delta \in [0,1]$ to be chosen below,
  \[\left|(1-|z_1|)^{-\frac{1}{\theta}} - (1-|z_2|)^{-\frac{1}{\theta}} \right| \leq C\left(1-|z_2|\right)^{-\frac{(1 - \delta)}{\theta}} \left|(1-|z_1|)^{-\frac{1}{\theta}} - (1-|z_2|)^{-\frac{1}{\theta}}\right|^\delta.\]
  Then because $\frac{d}{dx} (1-x)^{-\frac{1}{\theta}} = C(1-x)^{-\frac{1}{\theta} - 1}$,
  \[\left|(1-|z_1|)^{-\frac{1}{\theta}} - (1-|z_2|)^{-\frac{1}{\theta}} \right| \leq C(1-|z_2|)^{-\frac{(1 - \delta)}{\theta}}(1-|z_2|)^{-\delta\left(1 + \frac{1}{\theta} \right) }||z_1| - |z_2||^{\delta}.\]

  Using these estimates,
  \begin{equation*}
    I_3 \leq C A (1- |z_2|)^{p - \frac{\gamma p}{\theta} - \delta \gamma p} |z_1 -z_2|^{\delta \gamma p}.
  \end{equation*}
  We choose $\delta = \min\left\{1, \frac{1}{\gamma} - \frac{1}{\theta} \right\}$. This choice guarantees that $\delta \in [0,1]$ and that $p - \frac{\gamma p}{\theta} - \delta \gamma p \geq 0$. Consequently, we can conclude that
  \begin{equation} \label{eq:I_3-Kolmo}
    I_3 \leq C A |z_1 - z_2|^{\delta \gamma p}.
  \end{equation}
  Also note that by \eqref{eq:I_1-Kolmo} and \eqref{eq:I_2-Kolmo},
  \[I_1 \leq CA |z_1 - z_2|^{\delta \gamma p} \text{ and } I_2 \leq CA |z_1 - z_2|^{\delta \gamma p}.\]
  Therefore, we can conclude that for $z_1, z_2 \in B(0,1)$,
  \begin{equation} \label{eq:Y-modulus-of-cont}
    \E|Y(z_1) - Y(z_2)|^p \leq CA |z_1 - z_2|^{\gamma p \delta}.
  \end{equation}
  Similar but simpler calculations show that for any $z \in B(0,1)$,
  \begin{align}
    &\E|Y(z)|^p \leq C \left( \E|Y(z) - Y(0)|^p + \E|Y(0)|^p \right)\nonumber\\
     &\leq CA(1-|z|)^p \left( |z|^{p\gamma} (1-|z|)^{-\frac{p\gamma}{\theta}} + 1\right).
  \end{align}
  Because $p - \frac{p\gamma}{\theta}>0,$ there exists $C>0$ such that for any $z \in B(0,1)$,
  \begin{equation} \label{eq:Y-bound}
    \E|Y(z)|^p \leq CA
  \end{equation}
  and
  \begin{equation}
    \lim_{|z| \to 1} \E|Y(z)|^p  = 0,
  \end{equation}
  justifying the definition of $Y(z) = 0$ for $|z|=1$.

  By assumption, $\gamma p >d$ and  $\theta > \frac{p \gamma}{p-d}$. Therefore the exponent
  \[\gamma p \delta = \min \left\{\gamma p, p - \frac{\gamma p}{\theta} \right\}>d. \]

  Then by the Sobolev embedding theorem (Proposition \ref{prop:Sobolev-embedding}), \eqref{eq:Y-modulus-of-cont}, and \eqref{eq:Y-bound}, for any $s \in (d/p, \gamma \delta )$

  \begin{align} \label{eq:Y-Sobolev-embedding}
    &\E |Y|_{C\left(\overline{B(0,1)}\right)}^p \leq C\E | Y |_{W^{s, p }\left(\overline{B(0,1)}\right)}^p\nonumber\\
    &\leq C  \int_{B(0,1)} \E|Y(z)|^p dz + C\int_{B(0,1)}\int_{B(0,1)} \frac{\E|Y(z_1) - Y(z_2)|^p}{|z_1 - z_2|^{d + s p}}dz_1dz_2\nonumber\\
    &\leq           CA \int_{B(0,1)}dz           + CA \int_{B(0,1)}\int_{B(0,1)} |z_1-z_2|^{\gamma p \delta - sp -d}dz_1dz_2\nonumber\\
     &\leq CA.
  \end{align}

  Finally, we observe that if $x = x_0 + z(1-|z|)^{-\frac{1}{\theta}}$ then
  \[\frac{1}{1+|x-x_0|^\theta} = \frac{1}{1+ |z|^\theta(1-|z|)^{-1}} = \frac{1-|z|}{1 - |z| + |z|^\theta}.\]
  For all $\theta>0$, $\inf_{|z|\leq 1} (1 - |z| + |z|^\theta) =: c_\theta>0$. When $\theta \in (0,1)$, $c_\theta = 1$. From this and the fact that $z \mapsto x_0+ z(1-|z|)^{-\frac{1}{\theta}}$ maps $B(0,1)$ onto $\mathbb{R}^d$, we can conclude from \eqref{eq:Y-Sobolev-embedding}
  \begin{equation} \label{eq:X-weighted-Kolmo-norm}
    \E \left(\sup_{x \in \mathbb{R}^d}  \frac{|X(x)|}{1 + |x-x_0|^\theta} \right)^p \leq \frac{1}{c_\theta^p} \E \left(\sup_{z \in B(0,1)}|Y(z)| \right)^p \leq CA.
  \end{equation}

  To show that $\lim_{|x| \to 0} \frac{|X(x)|}{1 + |x-x_0|^\theta} = 0$ with probability one, choose a $\tilde{\theta} \in \left(\frac{p\gamma}{p-d},\theta\right)$. By \eqref{eq:X-weighted-Kolmo-norm},
  \[\E \left(\sup_{x \in \mathbb{R}^d} \frac{|X(x)|}{1 + |x-x_0|^{\tilde{\theta}}}\right)^p<+\infty.\]

  Therefore, with probability one,
  \[\lim_{|x| \to \infty} \frac{|X(x)|}{1 + |x-x_0|^\theta} \leq \left(\sup_{x \in \mathbb{R}^d}\frac{|X(x)|}{1 + |x-x_0|^{\tilde \theta}} \right)\lim_{|x| \to \infty} \frac{1 + |x-x_0|^{\tilde \theta}}{1 + |x-x_0|^\theta} = 0. \]
\end{proof}

The following corollary holds for processes with a time component.

\begin{corollary} \label{cor:bded-time-and-space}
  Let $X:[0,+\infty)\times \mathbb{R}^d \times \Omega \to \mathbb{R}$. Assume that for any $T>0$ there exists $A_T>0$, $p>d+1$, $\gamma\in (0,1)$ and $\beta\in(0,1)$ such that for all $x,y \in \mathbb{R}^d$ and all $t,s \in [0,T]$,
  \begin{align*}
    &\E|X(0,x_0)|^p \leq A_T\\
    &\E|X(t,x) - X(t,y)|^p \leq A_T|x-y|^{\gamma p}\\
    &\E|X(t,x) - X(s,x)|^p \leq A_T |t-s|^{\beta p}.
  \end{align*}
  Then if $p\gamma> d+ 1$, $p \beta> d+1 $ and $\theta> \frac{p\gamma}{p - (d+1)}$,
  there exists $C_{p,d,\gamma,\beta,\theta}$ such that
  \[\E \left| \sup_{t \in [0,T]} \sup_{x \in \mathbb{R}^d} \frac{|X(t,x)|}{1+|x-x_0|^\theta} \right|^p \leq C_{p,d,\gamma,\beta,\theta} A_T.\]
  and
  \[\Pro \left(\lim_{|x| \to \infty} \sup_{t \in [0,T]}  \frac{|X(t,x)|}{1 + |x-x_0|^\theta} = 0 \right) = 1.\]

  Therefore, $X \in  C_{x_0,\theta}([0,T]\times \mathbb{R}^d) $ with probability one.
\end{corollary}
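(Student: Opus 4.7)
The plan is to repeat the proof of Theorem \ref{thm:new_Kolmo} almost verbatim, with $(t,x) \in [0,T] \times \mathbb{R}^d$ playing the role of a $(d+1)$-dimensional spatial variable. Since $t$ already lies in the bounded interval $[0,T]$, only the $x$-coordinate needs the nonlinear change of variables. Writing $\rho(z) := x_0 + z(1-|z|)^{-1/\theta}$, I define $Y: [0,T] \times \overline{B(0,1)} \times \Omega \to \mathbb{R}$ by
\[Y(t,z) := \begin{cases} (1 - |z|)\, X(t, \rho(z)) & |z| < 1, \\ 0 & |z| = 1, \end{cases}\]
exactly as in the proof of Theorem \ref{thm:new_Kolmo}, but carrying $t$ as an inert parameter.

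For spatial increments $Y(t,z_1) - Y(t,z_2)$ at fixed $t$, the three-term decomposition $I_1 + I_2 + I_3$ from the proof of Theorem \ref{thm:new_Kolmo} carries over word-for-word, giving $\E|Y(t,z_1) - Y(t,z_2)|^p \leq C A_T |z_1 - z_2|^{\gamma p \delta}$ with $\delta := \min\{1, \tfrac{1}{\gamma} - \tfrac{1}{\theta}\}$; the only extra ingredient needed is the uniform bound $\E|X(t,x_0)|^p \leq C A_T$ for every $t \in [0,T]$, which follows by combining the hypothesis $\E|X(0,x_0)|^p \leq A_T$ with the temporal H\"older assumption. For temporal increments, the computation is immediate:
\[\E|Y(t_1,z) - Y(t_2,z)|^p = (1-|z|)^p\, \E|X(t_1, \rho(z)) - X(t_2, \rho(z))|^p \leq A_T\, |t_1 - t_2|^{\beta p}.\]
Combining these via the triangle inequality produces a joint H\"older bound in $(t,z)$ with exponent $\alpha := \min\{\beta, \gamma\delta\}$, together with the uniform bound $\E|Y(t,z)|^p \leq C A_T$ as before. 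Applying the fractional Sobolev embedding (Proposition \ref{prop:Sobolev-embedding}) on the bounded $(d+1)$-dimensional domain $[0,T] \times B(0,1)$ yields $\E \sup_{(t,z)} |Y(t,z)|^p \leq C A_T$, provided $\alpha p > d+1$. The identity $(1 + |x - x_0|^\theta)^{-1} = (1-|z|)/(1-|z| + |z|^\theta)$ and the strictly positive lower bound $c_\theta$ for its denominator, exactly as in the last paragraph of Theorem \ref{thm:new_Kolmo}, convert this into the weighted supremum estimate on $X$. The almost-sure decay statement then follows by applying the same bound with $\tilde{\theta} \in (p\gamma/(p-(d+1)), \theta)$ and comparing weights.

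The main obstacle is the bookkeeping required to verify that $\alpha p > d+1$; the strengthened hypotheses in the corollary are tuned precisely for this check. The temporal condition $\beta p > d+1$ is assumed directly. For the spatial side, $\gamma \delta p = \min\{\gamma p,\, p - \gamma p/\theta\}$: the first term exceeds $d+1$ by the spatial H\"older assumption, while the second exceeds $d+1$ exactly when $\theta > p\gamma/(p - (d+1))$, which is the hypothesis on $\theta$. Because the Sobolev embedding is now applied in $d+1$ dimensions rather than $d$, both regularity exponents $p\gamma$ and $p\beta$ as well as the weight threshold $\theta$ have to be increased by one unit compared with Theorem \ref{thm:new_Kolmo}, but nothing else in the argument changes.
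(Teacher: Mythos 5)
Your proposal is correct and follows essentially the same route as the paper: define $Y(t,z)$ by rescaling only the spatial variable, inherit the spatial modulus of continuity for $Y$ from the proof of Theorem \ref{thm:new_Kolmo}, combine it with the temporal H\"older bound, and apply the fractional Sobolev embedding on the bounded $(d{+}1)$-dimensional domain $[0,T]\times B(0,1)$. The one detail you spell out that the paper leaves implicit — deducing $\E|X(t,x_0)|^p \le CA_T$ uniformly in $t$ from the hypotheses at $t=0$ plus the temporal increment bound — is a genuine prerequisite for reusing the $I_1,I_2,I_3$ decomposition, and you handle it correctly.
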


\begin{proof}
  Define $Y: [0,T] \times \overline{B(0,1)}$ by
  \begin{equation}
    Y(t,z) = \begin{cases}
      (1-|z|)X(t,x_0 +z(1-|z|)^{-\frac{1}{\theta}}) & \text{ if } |z|<1\\
      0 & \text{ if } |z|=1.
    \end{cases}
  \end{equation}

  By \eqref{eq:Y-modulus-of-cont},
  \[\E|Y(t,z_1) - Y(t,z_2)|^p \leq CA_T|z_1-z_2|^{\gamma p \delta}\]
  where $\delta = \min\left\{1, \frac{1}{\gamma} - \frac{1}{\theta} \right\}$.

  We also know that
  \[\E|Y(t_1,z_1) - Y(t_2,z_1)|^p \leq A_T |t_1-t_2|^{p\beta.}\]
  Therefore,
  \[\E|Y(t_1,z_1) - Y(t_2,z_2)|^p \leq C A_T | t_1 - t_2|^{p \beta} + C A_T |z_1 - z_2|^{p \gamma \delta}.\]

  Because we assumed $p (\beta \wedge \gamma \delta) > (d+1)$, the fractional Sobolev embedding arguments prove that
  \[\E \left|\sup_{t \in [0,T]} \sup_{z \in B(0,1)} |Y(t,z)| \right|^p \leq C A_T.\]
  By \eqref{eq:X-weighted-Kolmo-norm},
  \[\E \left| \sup_{t \in [0,T]} \sup_{x \in \mathbb{R}^d} \frac{|X(t,x)|}{1 + |x-x_0|^\theta} \right|^p \leq CA_T.\]

  Finally, by choosing $\tilde \theta \in \left(\frac{p\gamma}{p - (d+1)},  \theta \right)$ we can argue that with probability one,
  \begin{align*}
    &\lim_{|x| \to \infty} \sup_{t \in [0,T]}  \frac{|X(t,x)|}{1 + |x-x_0|^\theta} \\
    &\leq \left(\sup_{t \in [0,T]} \sup_{x \in \mathbb{R}^d} \frac{|X(t,x)|}{1 + |x-x_0|^{\tilde\theta}} \right) \lim_{|x| \to \infty} \frac{1 + |x-x_0|^{\tilde\theta}}{1 + |x-x_0|^\theta} = 0.
  \end{align*}
\end{proof}

%
%
%
\subsection{Application of the Kolmogorov Continuity Theorem to stochastic convolutions.}

\begin{theorem} \label{thm:stoch-conv-growth-rate}
  Let $T>0$. Let $p> \frac{2(d+1)}{\eta}$ where $d$ is the spatial dimension and $\eta\in (0,1)$ is the parameter in the strong Dalang condition \eqref{eq:Dalang-cond}.

  For any $\theta>\frac{d+1}{p-(d+1)},$ there exists $C_{T,p,\theta}>0$ such that for all adapted
  random fields $\sigma:[0,T]\times\mathbb{R}^d \times \Omega \to \mathbb{R}$ satisfying
\begin{equation}
  \sup_{t \in [0,T]}\sup_{x \in \mathbb{R}^d} \E |\sigma(t,x)|^p < +\infty,
\end{equation}
  the stochastic convolution
  \[Z(t,x) := \int_0^t \int_{\mathbb{R}^d} G(t-s,x-y) \sigma(s,y)W(ds,dy)\]
  satisfies

  \begin{align} \label{eq:stoch-conv-bound}
  &\sup_{x_0 \in \mathbb{R}^d}\E \left|\sup_{t \in [0,T]}\sup_{x \in \mathbb{R}^d} \frac{|Z(t,x)|}{1 + |x-x_0|^\theta}  \right|^p 
   \leq C_{T,p,\theta} \sup_{t \in [0,T]} \sup_{x_0 \in \mathbb{R}^d} \E \left|  \sigma(t,x_0)\right|^p
  \end{align}
  and
  \begin{equation}
    \Pro(Z \in C_{x_0,\theta}([0,T]\times \mathbb{R}^d) )= 1 \text{ for all } x_0 \in \mathbb{R}.
  \end{equation}
  Furthermore, $C_{T,p,\theta}$ has the property that for any fixed $p, \theta$,
  \begin{equation} \label{eq:C-small}
    \lim_{T \to 0} C_{T,p,\theta} = 0.
  \end{equation}
\end{theorem}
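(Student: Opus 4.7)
The plan is to apply Corollary \ref{cor:bded-time-and-space} directly to the random field $Z$. Because $Z(0,x_0)=0$ deterministically for every $x_0$, the base point hypothesis is trivial, so the real work is to establish spatial and temporal moment increments with the right H\"older exponents, to do so uniformly in the center $x_0$, and to track the dependence on $T$ so as to get \eqref{eq:C-small}.

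For the increments I would invoke the standard stochastic convolution estimates of Sanz-Sol\'e and Sarr\`a \cite{sss-1999}. Under the strong Dalang condition \eqref{eq:Dalang-cond} with parameter $\eta$, one has for any $p\geq 2$ and any $\gamma,\beta\in(0,\eta/2)$ constants $C_1,C_2$ and positive exponents $\alpha_1,\alpha_2>0$ such that, for all $0\le s\le t\le T$ and $x,y\in\R^d$,
\begin{align*}
  \E|Z(t,x)-Z(t,y)|^p &\leq C_1\, T^{\alpha_1} \sup_{(r,z)\in[0,T]\times\R^d}\E|\sigma(r,z)|^p \cdot |x-y|^{p\gamma},\\
  \E|Z(t,x)-Z(s,x)|^p &\leq C_2\, T^{\alpha_2} \sup_{(r,z)\in[0,T]\times\R^d}\E|\sigma(r,z)|^p \cdot |t-s|^{p\beta}.
\end{align*}
These bounds come from Burkholder-Davis-Gundy applied to the stochastic convolution followed by the Fourier identity
\[
  \int_{\R^d}\int_{\R^d} \bigl(G(t-r,x-z_1)-G(t-r,y-z_1)\bigr)\bigl(G(t-r,x-z_2)-G(t-r,y-z_2)\bigr)\Lambda(z_1-z_2)\,dz_1dz_2 = \int_{\R^d} |e^{-i\xi\cdot x}-e^{-i\xi\cdot y}|^2 e^{-(t-r)|\xi|^2}\mu(d\xi),
\]
combined with the elementary bound $|e^{ia}-e^{ib}|^2\leq 2\min(2,|a-b|^2)$ and the Dalang condition. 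Because $\Lambda$ and $G$ are translation invariant, the right-hand sides do not depend on $x_0$, which furnishes the uniformity in the center of the weight.

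Given $p>2(d+1)/\eta$ and $\theta>(d+1)/(p-(d+1))$, choose $\gamma\in\bigl((d+1)/p,\min\{\theta(p-(d+1))/p,\,\eta/2\}\bigr)$; this interval is nonempty precisely by the two standing hypotheses, since the first says $(d+1)/p<\eta/2$ and the second rearranges to $(d+1)/p<\theta(p-(d+1))/p$. Similarly choose $\beta\in((d+1)/p,\eta/2)$. Then $p\gamma,p\beta>d+1$ and $\theta>p\gamma/(p-(d+1))$, so Corollary \ref{cor:bded-time-and-space} applies with $A_T=C\,T^{\alpha_1\wedge\alpha_2}\sup\E|\sigma|^p$, yielding both \eqref{eq:stoch-conv-bound} and the sample path statement $Z\in C_{x_0,\theta}([0,T]\times\R^d)$ almost surely, uniformly in $x_0$.

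The main obstacle is the vanishing constant claim \eqref{eq:C-small}: the Sanz-Sol\'e-Sarr\`a estimates as usually stated give $C$ independent of $T$, and one must genuinely extract a positive power $T^{\alpha_i}$. The trick is to work with exponents $\gamma'=\gamma+\varepsilon$ and $\beta'=\beta+\varepsilon$, still less than $\eta/2$, in the Fourier computation above, and then interpolate: the bound $|e^{i\xi\cdot x}-e^{i\xi\cdot y}|^2\leq C|x-y|^{2\gamma'}|\xi|^{2\gamma'}$ absorbs a stronger fraction of the Dalang exponent than needed for H\"older regularity $\gamma$, leaving a factor $|x-y|^{2(\gamma'-\gamma)}$ that can be traded for $T^{2(\gamma'-\gamma)}$ using the support of $\xi$ and $\int_0^t e^{-(t-r)|\xi|^2}dr\leq \min(T,|\xi|^{-2})$. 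An analogous interpolation works for the time increment. These positive powers of $T$ then propagate unchanged through the Sobolev embedding in Corollary \ref{cor:bded-time-and-space} to give \eqref{eq:C-small}.
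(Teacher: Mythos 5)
Your proposal follows essentially the same route as the paper: apply Corollary \ref{cor:bded-time-and-space} to $Z$, feed in the Sanz-Sol\'e--Sarr\`a increment estimates (which are uniform in $x_0$ by translation invariance), and check that the standing hypotheses on $p$ and $\theta$ make the required exponent inequalities $p\gamma>d+1$, $p\beta>d+1$, $\theta>p\gamma/(p-(d+1))$ attainable. Your verification that the interval for $\gamma$ is nonempty is exactly the right arithmetic.

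The one place you expend extra effort unnecessarily is \eqref{eq:C-small}. You are right that this is the delicate point, but Theorem 2.1 of \cite{sss-1999} is already stated with a free parameter $\alpha\in(0,\eta/2)$ and gives, for $\gamma\in(0,2\alpha)$ and $\beta\in(0,\alpha)$, increment bounds carrying factors $T^{p\alpha}$, $T^{p(\alpha-\gamma/2)}$, and $T^{p(\alpha-\beta)}$; choosing $\alpha\in\bigl(\tfrac{d+1}{p},\tfrac{\eta}{2}\bigr)$, $\beta\in\bigl(\tfrac{d+1}{p},\alpha\bigr)$, and $\gamma\in\bigl(\tfrac{d+1}{p},\,2\alpha\wedge\tfrac{\theta(p-(d+1))}{p}\bigr)$ forces all three exponents to be positive, and they pass unchanged through the Sobolev embedding in Corollary \ref{cor:bded-time-and-space}. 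So the paper simply cites the result in that form and the limit \eqref{eq:C-small} is immediate. Your interpolation sketch is the right mechanism by which those $T$-powers arise in the proof of that cited theorem, but the specific phrasing is off: one does not trade a residual factor $|x-y|^{2(\gamma'-\gamma)}$ for $T^{2(\gamma'-\gamma)}$ (those quantities are not interchangeable, and $|x-y|$ is unbounded). Rather one bounds $\int_0^t e^{-(t-r)|\xi|^2}\,dr\le\min(T,|\xi|^{-2})\le T^{1-\lambda}|\xi|^{-2\lambda}$ and chooses $\lambda$ so that the leftover power of $|\xi|$ is still integrable against $\mu$ under the strong Dalang condition; the positive power of $T$ comes from the time integral, not from the spatial increment. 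With this correction (or simply by citing Theorem 2.1 of \cite{sss-1999} as stated), your proof is complete and matches the paper's.
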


\begin{proof}
  By Theorem 2.1 of \cite{sss-1999}, for any $\alpha \in \left(0, \frac{\eta}{2} \right)$, $\gamma \in (0, 2\alpha)$, $\beta \in (0,\alpha)$, and $p \geq 2$, there exists constants $C_{\alpha,p,\gamma,\beta}>0$ such that for any $x, x_1, x_2 \in \mathbb{R}^d$ and $t,t_1,t_2 \in [0,T]$,
  \begin{align}
    &\E|Z(t,x)|^p \leq C_{\alpha,p,\gamma,\beta}  T^{p\alpha}\sup_{s \in [0,T]} \sup_{y \in \mathbb{R}^d} \E |\sigma(s,y)|^p , \\
    &\E|Z(t,x_1) - Z(t,x_2)|^p \nonumber\\
    &\qquad\leq C_{\alpha,p,\gamma,\beta} |x_1 - x_2|^{\gamma p}  T^{p\left(\alpha - \frac{\gamma}{2}\right)} \sup_{s \in [0,T]} \sup_{y \in \mathbb{R}^d}\E |\sigma(s,y)|^p ,\\
    &\E|Z(t_1,x) - Z(t_2,x)|^p \nonumber \\
    &\qquad\leq C_{\alpha,p,\gamma,\beta} |t_1-t_2|^{\beta p} T^{p\left( \alpha -\beta\right)}
     \sup_{s \in [0,T]}\sup_{y \in \mathbb{R}^d}\E |\sigma(s,y)|^p.
  \end{align}

  Let $\theta>\frac{d+1}{p-(d+1)}$ be given. Because $p> \frac{2(d+1)}{\eta}$, we choose $\alpha \in \left(\frac{d+1}{p}, \frac{\eta}{2} \right)$, $\beta \in \left(\frac{d+1}{p}, \alpha \right)$, and $\gamma \in \left(\frac{d+1}{p},2\alpha \wedge \frac{\theta(p-(d+1))}{p}  \right)$.  These choices guarantee that $\gamma p >d+1$, $\beta p >d+1$, and $\theta> \frac{\gamma p}{p-(d+1)}$.
  By Corollary \ref{cor:bded-time-and-space}, for  $T>0$, and $x_0 \in \mathbb{R}^d$

  \begin{align*}
    &\E \left[\sup_{t \in [0,T]} \sup_{x \in \mathbb{R}^d} \frac{|Z(t,x)|}{1+|x-x_0|^\theta} \right]^p \nonumber\\
    &\leq C_{p,\alpha,\gamma,\beta,\theta} \left(T^{p\alpha} + T^{p\left(\alpha - \gamma/2 \right)} + T^{p(\alpha - \beta)} \right) \sup_{s \in [0,T]} \sup_{y \in \mathbb{R}^d} \E |\sigma(s,y)|^p .
  \end{align*}
  The right-hand side is independent of $x_0$. Therefore
  \begin{align} \label{eq:Z-bound-w-T}
    &\sup_{x_0 \in \mathbb{R}^d}\E \left[\sup_{t \in [0,T]} \sup_{x \in \mathbb{R}^d} \frac{|Z(t,x)|}{1+|x-x_0|^\theta} \right]^p \nonumber\\
    &\leq C_{p,\alpha,\gamma,\beta,\theta} \left(T^{p\alpha} + T^{p\left(\alpha - \gamma/2 \right)} + T^{p(\alpha - \beta)} \right) \sup_{s \in [0,T]} \sup_{y \in \mathbb{R}^d} \E |\sigma(s,y)|^p .
  \end{align}

  Finally we can verify \eqref{eq:C-small} from \eqref{eq:Z-bound-w-T}.
%

\end{proof}

\begin{corollary}
  If $\sigma: [0,T] \times \mathbb{R}^d \times \Omega \to \mathbb{R}$ is an adapted random field such that
  \begin{equation}
    \sup_{t \in [0,T]} \sup_{x \in \mathbb{R}^d} \E|\sigma(t,x)|^p < + \infty
  \end{equation}
  for all $p \geq 1$,
  then
  \begin{equation}
    Z(t,x) = \int_0^t \int_{\mathbb{R}^d} G(t-s,x-y)\sigma(s,y)W(dyds)
  \end{equation}
  belongs to $C_{x_0,\theta}([0,T]\times \mathbb{R}^d)$ with probability one for any $x_0 \in \mathbb{R}^d$ and any $\theta>0$.

  In particular, this corollary holds when $\sigma$ is uniformly bounded.
\end{corollary}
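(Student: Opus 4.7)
The plan is to reduce this corollary directly to Theorem \ref{thm:stoch-conv-growth-rate} by choosing a sufficiently large exponent $p$. Fix an arbitrary $x_0 \in \mathbb{R}^d$ and an arbitrary $\theta > 0$. The constraints on $p$ in Theorem \ref{thm:stoch-conv-growth-rate} are that $p > \frac{2(d+1)}{\eta}$ and that $\theta > \frac{d+1}{p-(d+1)}$. The second constraint is equivalent to $p > (d+1)\tfrac{\theta+1}{\theta}$, so both can be satisfied simultaneously by selecting any
\[
p > \max\left\{\frac{2(d+1)}{\eta},\ (d+1)\frac{\theta+1}{\theta}\right\}.
\]

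Having fixed such a $p$, the hypothesis guarantees $\sup_{t \in [0,T]} \sup_{y \in \mathbb{R}^d} \E|\sigma(t,y)|^p < +\infty$, so Theorem \ref{thm:stoch-conv-growth-rate} applies and yields
\[
\E\left|\sup_{t \in [0,T]}\sup_{x \in \mathbb{R}^d} \frac{|Z(t,x)|}{1 + |x-x_0|^\theta}\right|^p \leq C_{T,p,\theta}\sup_{t \in [0,T]}\sup_{y \in \mathbb{R}^d}\E|\sigma(t,y)|^p < +\infty,
\]
together with $\Pro(Z \in C_{x_0,\theta}([0,T]\times \mathbb{R}^d))=1$, which is the claim. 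The ``in particular'' statement about uniformly bounded $\sigma$ is immediate because boundedness trivially yields finite $p$th moments for every $p \geq 1$.

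There is essentially no obstacle here: the corollary is a repackaging of Theorem \ref{thm:stoch-conv-growth-rate} in the regime where moments of all orders exist, so that the restriction on $p$ (which in the theorem couples to the allowable weight exponent $\theta$) can be removed. The only thing to check is the elementary algebra showing that for every $\theta > 0$ some admissible $p$ exists, which we did above.
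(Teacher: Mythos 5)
Your proposal is correct and is essentially the same argument as the paper's (which merely notes that Theorem \ref{thm:stoch-conv-growth-rate} holds for any $\theta > \frac{d+1}{p-(d+1)}$, so if $p$ can be taken arbitrarily large then $\theta$ can be arbitrarily small). You simply spell out the elementary algebra identifying the admissible range of $p$, which the paper leaves implicit.
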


\begin{proof}
  Theorem \ref{thm:stoch-conv-growth-rate} holds for any $\theta> \frac{d+1}{p-(d+1)}$. If $p$ can be arbitrarily large, then $\theta$ can be arbitrarily small.
\end{proof}

\section{The mapping $\mathcal{M}$} \label{S:M}
Given a deterministic
$z \in C_b([0,T]\times \mathbb{R}^d)$, let $u= \mathcal{M}(z)$ be the solution to the integral equation

\begin{equation} \label{eq:M-def}
  u(t,x) = \int_0^t \int_{\mathbb{R}^d} G(t-s,x-y) f(u(s,y))dyds + z(t,x).
\end{equation}

In this section we will show that for any $z \in C_b([0,T]\times \mathbb{R}^d)$, the solution to \eqref{eq:M-def} exists and is unique and that $\mathcal{M}$ can be uniquely extended to a map from  $C_{x_0,\theta}([0,T]\times \mathbb{R}^d)$ to itself for any $x_0 \in \mathbb{R}^d$ and $\theta>0$. We will also prove that $\mathcal{M}$ is a Lipschitz continuous mapping from $C_{x_0,\theta}([0,T]\times \mathbb{R}^d)$ to itself for any $T>0$, $x_0 \in \mathbb{R}^d$, and $\theta>0$, and that the Lipschitz constant does not depend on the center $x_0$ (Theorem \ref{thm:M-extension}).

\begin{lemma} \label{lem:a-priori1}
  Whenever $z \in C_b([0,T]\times \mathbb{R}^d)$ and $u$ is a solution to \eqref{eq:M-def}, it follows that $u \in C_b([0,T]\times \mathbb{R}^d)$ and
  \begin{equation} \label{eq:a-priori}
    \sup_{t \in [0,T]} \sup_{x \in \mathbb{R}^d} |u(t,x)| \leq \left(\frac{ e^{(1 + 2\kappa)T}-1}{1+2\kappa}\right)\sup_{t \in [0,T]} \sup_{x \in \mathbb{R}^d} |f(z(t,x))| + \sup_{t \in [0,T]} \sup_{x \in \mathbb{R}^d}|z(t,x)|
  \end{equation}
  where $\kappa$ is from \eqref{eq:f-cond-dissip} and $\frac{ e^{(1 + 2\kappa)T}-1}{1+2\kappa} :=T$ when $\kappa=-\frac{1}{2}$.
\end{lemma}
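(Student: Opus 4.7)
The plan is to set $v = u - z$ and observe that $v$ is the mild solution of the forced heat equation $\partial_t v = \tfrac{1}{2}\Delta v + f(v+z)$ with $v(0,\cdot) \equiv 0$, namely
\begin{equation*}
v(t,x) = \int_0^t \int_{\R^d} G(t-s, x-y) f(v(s,y)+z(s,y))\,dyds.
\end{equation*}
Since $|u|_{C_b([0,T]\times\R^d)} \leq |v|_{C_b([0,T]\times\R^d)} + |z|_{C_b([0,T]\times\R^d)}$, the lemma reduces to producing the corresponding quantitative estimate for $|v|_{C_b}$.

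The heart of the argument is a dissipative a priori bound that leverages Proposition \ref{prop-left-deriv-subdiff}. Formally, applying the subdifferential inequality to the map $t \mapsto |v(t,\cdot)|^2_{C_0}$ at an element $\mu$ supported on a maximizer of $v^2$, three ingredients combine into an ODE-type differential inequality. First, the spatial maximum principle gives $\langle \tfrac{1}{2}\Delta v^2, \mu\rangle \leq 0$. Second, splitting the drift as $f(v+z) = [f(v+z) - f(z)] + f(z)$, the equivalent one-sided Lipschitz formulation \eqref{eq:f-cond-sign} gives $v[f(v+z) - f(z)] \leq \kappa v^2$. Third, Young's inequality controls the remaining forcing as $2vf(z) \leq v^2 + f(z)^2$. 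Together these produce
\begin{equation*}
\frac{d^-}{dt}|v(t,\cdot)|^2_{C_0} \leq (1+2\kappa)|v(t,\cdot)|^2_{C_0} + |f(z(t,\cdot))|^2_{C_b},
\end{equation*}
whence Gronwall's lemma and $v(0,\cdot) \equiv 0$ deliver exactly the factor $\frac{e^{(1+2\kappa)T}-1}{1+2\kappa}$ that appears in the statement.

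The principal obstacle is regularity: Proposition \ref{prop-left-deriv-subdiff} requires $v(t,\cdot) \in C_0(\R^d)$ together with continuous first derivatives, whereas a priori $v$ is only continuous and may have nonzero limits at spatial infinity. To circumvent this, I would approximate $f$ by globally Lipschitz functions $f_n$ that preserve the one-sided constant $\kappa$ (for instance by applying a Yosida regularization to the non-increasing function $\phi$ from Proposition \ref{prop-f-cond-equiv} and then adding $\kappa u$ back in). For each $n$, standard Picard iteration in $C_b([0,T]\times\R^d)$ produces a unique bounded solution $u_n$, and $v_n := u_n - z$ has the parabolic regularity necessary to justify the subdifferential computation, after (if needed) localization by a spatial cutoff whose contribution can be shown to vanish. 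The uniform dissipative estimate then transfers to $v$ by a second application of the one-sided Lipschitz property to the differences $v_n - v_m$, proving $u = v + z \in C_b$ along with the claimed quantitative bound.
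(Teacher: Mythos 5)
Your outline replaces the paper's parabolic comparison-principle argument with the $C_0$-subdifferential technique of Proposition \ref{prop-left-deriv-subdiff}, the tool the paper saves for the Lipschitz estimate on $\mathcal{M}$ in equation \eqref{eq:M-Lip-w-proof}. The algebraic core — split $f(v+z) = [f(v+z)-f(z)] + f(z)$, apply the one-sided Lipschitz bound to the first term and Young's inequality to the second — coincides with the paper's.

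The genuine gap is the $C_0$ hypothesis, which you correctly flag but do not close. Proposition \ref{prop-left-deriv-subdiff} requires $v(t,\cdot) = u(t,\cdot) - z(t,\cdot)$ to lie in $C_0(\mathbb{R}^d)$, so that the supremum is attained and the subdifferential is realized by a probability measure supported on the maximizer set (Proposition \ref{prop:subdff}). Here $v(t,\cdot)$ is only bounded; it has no reason to vanish at infinity. Yosida regularization of $f$ supplies the smoothness half of the hypothesis but does nothing for the decay half: each $v_n = u_n - z$ is still merely $C_b$. The spatial cutoff you invoke is therefore where the entire difficulty lives, and it is not a harmless step — the cutoff does not commute with $\Delta$, producing error terms involving $\nabla v_n$ and $v_n$ against derivatives of the cutoff, and controlling these is itself an a priori estimate that you have not supplied. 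The paper avoids the problem altogether by working pointwise: it shows $e^{-(1+2\kappa)t}v^2$ is a subsolution of a forced heat equation and applies the comparison principle on $\mathbb{R}^d$, which needs only boundedness, not decay. (Alternatively, the weighting device of the later Lipschitz proof — multiply by $(1+|x-x_0|^2)^{-\theta/2}$ to land in $C_0$, absorb the extra drift and potential terms, then let $\theta \to 0$ — could rescue your route, but at greater cost than the comparison principle.) A minor further point: applying Proposition \ref{prop-left-deriv-subdiff} to $|v(t,\cdot)|_{C_0}$ directly, rather than to its square, yields
\[
\frac{d^-}{dt}|v(t,\cdot)|_{C_0} \leq \kappa|v(t,\cdot)|_{C_0} + |f(z(t,\cdot))|_{C_b}
\]
without Young's inequality and with a sharper growth factor, so squaring first only complicates the bookkeeping.
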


\begin{proof}
    Assume that $u$ is a solution to \eqref{eq:M-def} for $z \in C_b([0,T]\times \mathbb{R}^d)$. Let $v(t,x) := u(t,x) - z(t,x)$. Note that
  \begin{align*}
    &v(t,x) = \int_0^t \int_{\mathbb{R}^d} G(t-s,x-y)f(u(s,y))dyds. \\
  \end{align*}
  $v(t,x)$ is differentiable and solves
  \begin{equation}
    \frac{\partial v}{\partial t}(t,x) = \frac{1}{2}\Delta v(t,x) + f(v(t,x) + z(t,x)), \ \ \ v(0,x) = 0.
  \end{equation}
  We can re-write this as
  \[\frac{\partial v}{\partial t}(t,x) =\frac{1}{2}\Delta v(t,x) + (f(v(t,x) + z(t,x)) - f(z(t,x))) + f(z(t,x)).\]
  We can calculate that
  \begin{align} \label{eq:deriv-v-square}
    \frac{\partial }{\partial t} (v(t,x))^2 = & v(t,x) \Delta v(t,x) \nonumber\\
    &+ 2(f(v(t,x) + z(t,x)) - f(z(x,t)))v(t,x) \nonumber\\
    &+ 2f(z(t,x))v(t,x)
  \end{align}
  Also note that
  \begin{equation} \label{eq:Laplace-of-squares}
   \frac{1}{2}\Delta \left[ (v(t,x))^2\right] =  v(t,x) \Delta v(t,x) +  |\nabla v(t,x)|^2 \geq  v(t,x) \Delta v(t,x).
  \end{equation}
  Furthermore, \eqref{eq:f-cond-sign} guarantees that
  \begin{align} \label{eq:f-diff-squared}
    &(f(v(t,x) + z(t,x)) - f(z(t,x)))v(t,x) \leq \kappa |v(t,x)|^2.
  \end{align}
  By Young's inequality,
  \begin{equation} \label{eq:Youngs}
    f(z(t,x))v(t,x) \leq \frac{1}{2}|f(z(t,x))|^2 + \frac{1}{2} |v(t,x)|^2.
  \end{equation}
  Estimates  \eqref{eq:deriv-v-square}, \eqref{eq:Laplace-of-squares}, \eqref{eq:f-diff-squared}, and \eqref{eq:Youngs} show that $(v(t,x))^2$ satisfies
  \[\frac{\partial}{\partial t} (v(t,x))^2 \leq \frac{1}{2} \Delta(v(t,x))^2 + (1 + 2\kappa)|v(t,x)|^2 + |f(z(t,x))|^2.\]
  Then it holds that
  \[\frac{\partial}{\partial t} \left(e^{-(1 + 2\kappa)t}(v(t,x))^2 \right) \leq \frac{1}{2}\Delta \left(e^{-(1 + 2\kappa)t}(v(t,x))^2 \right) + e^{-(1 + 2\kappa)t}  |f(z(t,x))|^{2}.\]
  This is a subsolution to a heat equation. By the comparison principle of the heat equation
  \begin{align*}
    &e^{-(1 + 2\kappa)t}(v(t,x))^2 \leq \int_0^t \int_{\mathbb{R}^d} G(t-s,x-y) e^{-(1 + 2\kappa)s} |f(z(s,y))|^{2}dyds.
  \end{align*}
  Because we assumed that $\sup_{s \in [0,T]}\sup_{y \in \mathbb{R}^d} |z(s,y)|< + \infty$ and $G$ is the heat kernel, we can conclude that
  \[\sup_{t \in [0,T]} \sup_{x \in \mathbb{R}^d} |v(t,x)|^2 < \left(e^{(1 + 2 \kappa)T} \int_0^T e^{-(1 + 2\kappa)s}ds \right) \sup_{t \in [0,T]}\sup_{x \in \mathbb{R}^d} |f(z(t,x))|^2.\]

  Finally, \eqref{eq:a-priori} follows because $u(t,x) = v(t,x) + z(t,x)$. The continuity of $u$ is a straightforward consequence of the fact that $$(t,x) \mapsto \int_0^t \int_{\mathbb{R}^d} G(t-s,x-y)\varphi(s,y)dyds$$ is continuous whenever $\varphi$ is uniformly bounded (see, for example, \cite{sss-1999}).
\end{proof}

\begin{theorem}[Existence of $\mathcal{M}(z)$]
  Given $z \in C_b([0,T]\times \mathbb{R}^d)$ there exists a $C_b([0,T]\times \mathbb{R}^d)$-valued solution $u =\mathcal{M}(z)$ to \eqref{eq:M-def}.
\end{theorem}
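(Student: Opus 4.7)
The plan is to construct $u := \mathcal{M}(z)$ via approximation of $f$ by globally Lipschitz functions that preserve the one-sided Lipschitz condition. Since by Proposition \ref{prop-f-cond-equiv} the function $\phi := f - \kappa I$ is continuous and non-increasing, the operator $-\phi$ is maximal monotone on $\mathbb{R}$, and its Yosida approximation $(-\phi)_\lambda := \lambda^{-1}(I - J_\lambda)$, where $J_\lambda := (I + \lambda(-\phi))^{-1}$, is globally $\lambda^{-1}$-Lipschitz and non-decreasing and converges to $-\phi$ uniformly on compact subsets of $\mathbb{R}$ as $\lambda \downarrow 0$. Setting $f_\lambda(u) := -(-\phi)_\lambda(u) + \kappa u$ yields a family of globally Lipschitz functions satisfying $f_\lambda(u_2) - f_\lambda(u_1) \leq \kappa(u_2 - u_1)$ for $u_1 < u_2$, with $f_\lambda \to f$ uniformly on compact subsets of $\mathbb{R}$.

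For each $\lambda > 0$, because $f_\lambda$ is globally Lipschitz, a standard Picard iteration --- either on short time intervals where the map is a contraction on $C_b$ and then concatenated, or in the equivalent weighted norm $\sup_t e^{-\alpha t}|\cdot|_{C_b}$ for sufficiently large $\alpha$ --- produces a unique solution $u_\lambda \in C_b([0,T]\times\mathbb{R}^d)$ to
\[u_\lambda(t,x) = \int_0^t\int_{\mathbb{R}^d} G(t-s,x-y)\, f_\lambda(u_\lambda(s,y))\, dy\, ds + z(t,x).\]
Since $f_\lambda$ obeys the one-sided Lipschitz condition with the same constant $\kappa$, Lemma \ref{lem:a-priori1} applies and bounds $|u_\lambda|_{C_b([0,T]\times\mathbb{R}^d)}$ in terms of $|z|_{C_b}$ and $\sup |f_\lambda(z)|$. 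Because $z$ takes values in the compact interval $[-|z|_{C_b}, |z|_{C_b}]$ on which $f_\lambda \to f$ uniformly, $\sup|f_\lambda(z)|$ is bounded independently of $\lambda$, yielding a uniform bound $\sup_\lambda |u_\lambda|_{C_b([0,T]\times\mathbb{R}^d)} \leq M$.

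Next I would show that $\{u_\lambda\}$ is Cauchy in $C_b([0,T]\times\mathbb{R}^d)$ as $\lambda \downarrow 0$. For $\lambda, \lambda' > 0$, the difference $u_\lambda - u_{\lambda'}$ is a mild solution of a heat equation with forcing
\[\bigl(f_\lambda(u_\lambda) - f_\lambda(u_{\lambda'})\bigr) + \bigl(f_\lambda(u_{\lambda'}) - f_{\lambda'}(u_{\lambda'})\bigr).\]
Repeating the energy computation of Lemma \ref{lem:a-priori1} --- applying the one-sided Lipschitz bound on $f_\lambda$ to the first piece and Young's inequality to the second, then invoking the heat-equation comparison principle --- yields
\[|u_\lambda - u_{\lambda'}|_{C_b([0,T]\times\mathbb{R}^d)}^2 \leq C_{T,\kappa}\, \sup_{t,x} \bigl|f_\lambda(u_{\lambda'}(t,x)) - f_{\lambda'}(u_{\lambda'}(t,x))\bigr|^2.\]
Since $|u_{\lambda'}| \leq M$ and $f_\lambda \to f$ uniformly on $[-M,M]$, the right-hand side vanishes as $\lambda, \lambda' \to 0$. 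Hence $u_\lambda \to u$ in $C_b([0,T]\times\mathbb{R}^d)$, and passing $\lambda \to 0$ in the integral equation by dominated convergence (using $|u_\lambda| \leq M$ and uniform convergence of $f_\lambda$ to $f$ on $[-M,M]$) shows that $u$ solves \eqref{eq:M-def}. I then define $\mathcal{M}(z) := u$.

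I expect the chief technical obstacle to be rigorously justifying the pointwise energy computation $\partial_t v^2 \leq \tfrac{1}{2}\Delta v^2 + \cdots$ used both in Lemma \ref{lem:a-priori1} and in the Cauchy step above, since with $z$ merely continuous the approximating solutions $u_\lambda$ need not be classically differentiable. This is handled implicitly in Lemma \ref{lem:a-priori1} via the continuity of heat convolutions of bounded functions, and for the Cauchy argument it can be made fully rigorous by introducing a further mollification of $z$ to $z_\epsilon \in C_b^\infty$, running the same argument for the correspondingly smooth $u_{\lambda,\epsilon}$, and then removing the regularization using continuity of the deterministic map $z \mapsto u_\lambda$ which follows from the Lipschitz continuity of $f_\lambda$ and a Gr\"onwall-type estimate.
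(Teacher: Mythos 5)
Your proposal is correct and uses the same Yosida regularization of $\phi = f - \kappa I$ and the same a priori bound (Lemma \ref{lem:a-priori1}) as the paper, but the convergence step is genuinely different. The paper exploits uniform H\"older bounds on $v_n = u_n - z$ (coming from the heat-kernel smoothing of the uniformly bounded forcing $f_n(u_n)$) to extract a locally uniformly convergent subsequence via Arzel\`a--Ascoli, then identifies the limit by dominated convergence using only the \emph{pointwise} convergence $\phi_n \to \phi$ supplied by the cited resolvent proposition. You instead show the whole family $u_\lambda$ is Cauchy in $C_b([0,T]\times\mathbb{R}^d)$ by rerunning the energy/comparison argument of Lemma \ref{lem:a-priori1} on the difference $u_\lambda - u_{\lambda'}$, splitting the forcing into a one-sided-Lipschitz part and a perturbation part. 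That requires the stronger fact that $f_\lambda \to f$ uniformly on compact sets; this is indeed true here (e.g.\ from $|J_\lambda u - u| \le \lambda|\phi(u)|$ together with local uniform continuity of $\phi$, or a P\'olya--Dini argument for monotone sequences), but you should say so explicitly since the standard reference gives only pointwise convergence. Your route buys convergence of the full family without passing to a subsequence and essentially previews the Lipschitz estimate the paper proves in the following theorem, so it is more unified; the paper's compactness route is a bit more self-contained in needing only pointwise convergence of the approximants. Your concern about the pointwise differentiability of $v_\lambda$ when $z$ is merely continuous is legitimate --- the paper itself treats this informally in Lemma \ref{lem:a-priori1} --- and the mollification you sketch is a reasonable way to make either version of the energy argument rigorous.
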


\begin{proof}
  By Proposition \ref{prop-f-cond-equiv},
  \[\phi(u) := f(u) - \kappa u\]
  is a non-increasing function.

  Define the  Yosida approximations for $\phi$ by
  \begin{equation}
    \phi_n(u) = n\left(\left(I -\frac{1}{n}\phi\right)^{-1} (u) - u\right), \ \ \ \text{where } I(u) = u .
  \end{equation}
  According to \cite[Proposition D.11]{dpz},
  \begin{enumerate}
    \item Each $\phi_n$ is Lipschitz continuous satisfying
  \[|\phi_n(u_2) - \phi_n(u_1)| \leq 2n | u_2 - u_1|, \text{ for all } u_1,u_2 \in \mathbb{R}.\]
    \item Each $\phi_n$ is non-increasing.
    \item For every $u \in \mathbb{R}$, $\displaystyle{\lim_{n \to \infty} \phi_n(u) = \phi(u),}$ and
    \item Each $\phi_n$ is pointwise bounded by $\phi$ in the sense that for all $u \in \mathbb{R}$, $\displaystyle{|\phi_n(u)| \leq |\phi(u)|.}$
  \end{enumerate}
  Let $f_n(u):= \phi_n(u) + \kappa u$. The $f_n$ satisfy
  \begin{align}
    & |f_n(u_1) - f_n(u_2)| \leq (2n+ |\kappa|)|u_1 - u_2|,  &&\text{ for all } u_1, u_2 \in \mathbb{R} \label{eq:f_n-lip}\\
    &f_n(u_2) - f_n(u_1) \leq \kappa(u_2 - u_1) &&\text{ for all } u_1<u_2, \label{eq:f_n-dissip}\\
    &\lim_{n \to \infty} f_n(u) = f(u), &&\text{ for all } u \in \mathbb{R}, \label{eq:f_n-lim}\\
    & |f_n(u)| \leq |\phi(u)| + |\kappa| |u| \leq |f(u)| + 2 |\kappa| |u|, &&\text{ for all } u \in \mathbb{R}. \label{eq:f_n-bound}
  \end{align}
  Because the $f_n$ are Lipschitz continuous \eqref{eq:f_n-lip}, standard Picard iteration arguments  prove that there exists a unique solution for each $n$ to
  \[u_n(t,x) = \int_0^t \int_{\mathbb{R}^d} G(t-s,x-y) f_n(u_n(s,y))dyds + z(t,x).\]

  By Lemma \ref{lem:a-priori1}, because all of the $f_n$ satisfy \eqref{eq:f_n-dissip} with the same constant $\kappa$,
  \begin{align*}
    &\sup_n\sup_{t \in [0,T]} \sup_{x \in \mathbb{R}} |u_n(t,x)|\\
    &\leq \left(\frac{e^{(1 + 2 \kappa)T} -1 }{1 + 2\kappa} \right) \sup_{t \in [0,T]}\sup_{x \in \mathbb{R}^d} |f_n(z(t,x))| + \sup_{t \in [0,T]} \sup_{x \in \mathbb{R}^d}|z(t,x)|.
  \end{align*}
  By \eqref{eq:f_n-bound},
  \begin{align*}
    &\sup_n\sup_{t \in [0,T]} \sup_{x \in \mathbb{R}} |u_n(t,x)|\\
    &\leq \left(\frac{e^{(1 + 2 \kappa)T} -1 }{1 + 2\kappa} \right) \sup_{t \in [0,T]}\sup_{x \in \mathbb{R}^d} |f(z(t,x))| \\
    &\qquad+ \left(\frac{2|\kappa|\left( e^{(1+2\kappa)T} -1 \right)}{1 + 2\kappa} + 1\right) \sup_{t \in [0,T]} \sup_{x \in \mathbb{R}^d} |z(t,x)|\\
    &<+\infty.
  \end{align*}
  As a consequence of the fact that $u_n$ are uniformly bounded and \eqref{eq:f_n-bound},
  \[\sup_n\sup_{t \in[0,T]} \sup_{x \in \mathbb{R}^d} |f_n(u_n(t,x))|<+\infty \]
  as well.
  Let $v_n(t,x) = u_n(t,x) -z(t,x)$. $v_n(t,x)$ solves
  \[v_n(t,x) = \int_0^t \int_{\mathbb{R}^d} G(t-s,x-y) f_n(u_n(s,y))dyds,\]
  and now using well-known properties of convolutions of bounded functions against the heat kernel (see, for example \cite{sss-1999}), there exist $C>0$ and $\gamma \in (0,1)$ such that
  \begin{align}
    &\sup_{n \in \mathbb{N}} \sup_{x\in \mathbb{R}^d} \sup_{t \in [0,T]} |v_n(t,x)| \leq C\\
    &\sup_{n \in \mathbb{N}} \sup_{x_1\in \mathbb{R}^d}\sup_{x_2 \in  \mathbb{R}^d} \sup_{t_1 \in [0,T]} \sup_{t_2 \in [0,T]} \frac{|v_n(t_1,x_1) - v_n(t_2,x_2)|}{|t_1 - t_2|^\gamma + |x_1 - x_2|^\gamma} \leq C.
  \end{align}
  Therefore, by the Arzela-Ascoli theorem, there is a subsequence (relabeled $v_n$) that converges uniformly on compact subsets of $[0,T]\times \mathbb{R}^d$. We call the limit $\tilde{v}$. By the dominated convergence theorem and the fact that \eqref{eq:f_n-lim} guarantees that $f_n(v_n(s,y) + z(s,y)) \to f(\tilde{v}(s,y) + z(s,y))$,
  \[\tilde{v}(t,x) = \int_0^t \int_{\mathbb{R}^d} G(t-s,x-y)f(\tilde{v}(s,y) + z(s,y))dsdy. \]
  Let $\tilde{u}(t,x):= \tilde{v}(t,x) + z(t,x)$ and notice that $\tilde u$ solves \eqref{eq:M-def}.
\end{proof}

\begin{lemma}[A Gr\"onwall-type lemma] \label{lem:Gronwall-fast}
  Let $C_1,C_2>0$ and let $\psi(t)$ and $\Theta(t)$ be positive functions. Assume that $\varphi(t)$ is a positive function with the properties that
  \begin{equation}
    \varphi(0) = 0
  \end{equation}
  and for all $t>0$,
  \begin{equation} \label{eq:gronwall-fast-assum}
    \frac{d^-}{dt}\varphi(t) \leq   C_1\varphi(t) + C_2   + \Theta(t) \mathbbm{1}_{\{\varphi(t)< \psi(t)\}}.
  \end{equation}
  Then for any $T>0$,
  \begin{equation} \label{eq:gronwall-fast}
    \sup_{t \in [0,T]} \varphi(t) \leq \left( C_2 T + \sup_{t \in [0,T]} \psi(t) \right) e^{C_1T}  .
  \end{equation}
\end{lemma}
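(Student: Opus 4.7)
The essential observation is that the indicator $\mathbbm{1}_{\{\varphi(t)<\psi(t)\}}$ switches off precisely when $\varphi$ has risen above the threshold $\psi$, so the unpleasant term $\Theta(t)$ can only act while $\varphi$ is still small. Consequently, on any time interval where $\varphi(t) > \sup_{s\in[0,T]}\psi(s) =: M$, the hypothesis collapses to the ordinary differential inequality $\frac{d^-}{dt}\varphi(t) \leq C_1\varphi(t) + C_2$, which is a standard Gr\"onwall situation.

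The plan is as follows. Fix $T > 0$ and let $M := \sup_{t\in[0,T]}\psi(t)$. If $\sup_{t\in[0,T]}\varphi(t) \leq M$, then \eqref{eq:gronwall-fast} holds trivially since the right-hand side is at least $M$. Otherwise, pick $t^* \in [0,T]$ at which $\varphi(t^*) = \sup_{t\in[0,T]}\varphi(t) > M$ (I will assume $\varphi$ is continuous, which is implicit given the way this lemma will be applied, so the supremum is attained). Since $\varphi(0) = 0 < M$, the set $\{t \in [0,t^*] : \varphi(t) \leq M\}$ is nonempty, and by continuity its supremum $s^*$ satisfies $\varphi(s^*) = M$ and $\varphi(t) > M \geq \psi(t)$ for every $t \in (s^*,t^*]$. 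On this interval the indicator vanishes, so
\begin{equation*}
  \frac{d^-}{dt}\varphi(t) \leq C_1 \varphi(t) + C_2, \qquad t \in (s^*,t^*].
\end{equation*}

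The second step is to convert this differential inequality with upper-left derivative into an integral bound. Setting $\tilde\varphi(t) := e^{-C_1 t}\varphi(t)$ and using the chain-rule-type identity $\frac{d^-}{dt}\tilde\varphi(t) = e^{-C_1 t}\big(\frac{d^-}{dt}\varphi(t) - C_1 \varphi(t)\big)$ (valid because $\varphi$ is continuous), one obtains $\frac{d^-}{dt}\tilde\varphi(t) \leq C_2 e^{-C_1 t}$. A standard comparison lemma for continuous functions with bounded upper-left derivative (analogous to Proposition D.4 of \cite{dpz}) then gives $\tilde\varphi(t^*) - \tilde\varphi(s^*) \leq \int_{s^*}^{t^*} C_2 e^{-C_1 u} du$. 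Multiplying by $e^{C_1 t^*}$ and using $\varphi(s^*) = M$ yields
\begin{equation*}
  \varphi(t^*) \leq M e^{C_1(t^*-s^*)} + \frac{C_2}{C_1}\bigl(e^{C_1(t^*-s^*)} - 1\bigr) \leq M e^{C_1 T} + C_2 T e^{C_1 T},
\end{equation*}
where the last step uses the elementary inequality $e^x - 1 \leq x e^x$ for $x \geq 0$, applied with $x = C_1(t^*-s^*) \leq C_1 T$. This is exactly \eqref{eq:gronwall-fast}.

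The main obstacle is the step converting the upper-left-derivative inequality into an integrated bound, which requires either citing or verifying the comparison lemma for $D^-$; everything else is bookkeeping. One mildly delicate point is identifying the last time $s^*$ before $t^*$ at which $\varphi = M$, which relies on continuity of $\varphi$; if the intended application only provides upper semicontinuity, one would instead approximate by $M+\varepsilon$ and send $\varepsilon \downarrow 0$.
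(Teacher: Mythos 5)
Your proof is correct and takes essentially the same approach as the paper: both identify the last time $s^*$ (the paper's $\tau_t$) before the point of interest at which $\varphi$ was below $\sup\psi$, note that the indicator vanishes on the interval afterward, and apply the standard Gr\"onwall argument there. The only cosmetic differences are that the paper works with an arbitrary $t$ rather than a maximizer (so it need not assume the supremum of $\varphi$ is attained) and passes through the integral form of Gr\"onwall rather than an integrating factor, but the substance is identical.
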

Notice that in this lemma there are no bounds on the magnitude of $\Theta$ and that $\Theta$ does not affect the bound of the supremum of $\varphi$ in \eqref{eq:gronwall-fast}. The assumption \eqref{eq:gronwall-fast-assum} states that $\varphi$ can grow very quickly if $\varphi(t)<\psi(t)$, but $\varphi(t)$ can only grow in at exponential speed once $\varphi(t)$ exceeds $\psi(t)$.
\begin{proof}
  For any $t \in [0,T]$, define
  \[\tau_t: = \sup\left\{s \in [0,t]: \varphi(s) \leq \sup_{r \in [0,T]} \psi(r)\right\}.\]
  If $\varphi(t) \leq \sup_{r \in [0,T]} \psi(r)$, then $\tau_t = t$.
  We observe that $\varphi(\tau_t) \leq \sup_{r \in [0,T]} \psi(r)$ while
  for $s \in (\tau_t,t]$, $\varphi(t) > \sup_{r \in [0,T]} \psi(r)$. Then by assumption \eqref{eq:gronwall-fast-assum}, for $s \in (\tau_t,t]$,
  \[\varphi(s) \leq \varphi(\tau_t) + \int_{\tau_t}^t (C_2 + C_1 \varphi(r))dr\leq \sup_{r \in [0,T]} \psi(r) + \int_{\tau_t}^t (C_2 + C_1 \varphi(r))dr.\]
  By Gr\"onwall's lemma and the fact that $t-\tau_t \leq T$,
  \[\varphi(t) \leq \left(C_2(t-\tau_t) +\sup_{r \in [0,T]} \psi(r)  \right)e^{C_1(t-\tau_t)} \leq \left( C_2 T + \sup_{r \in [0,T]}\psi(r) \right)e^{C_1 T}.\]
  Because our choice of $t \in [0,T]$ was arbitrary, we can conclude that \eqref{eq:gronwall-fast} holds.
\end{proof}

\begin{theorem}
  For any $\theta>0$ there exists a constant $C=C(\theta,\kappa)>0$ such that whenever $T>0$ and $z_1,z_2 \in C_b([0,T]\times \mathbb{R}^d)$ and $u_1 = \mathcal{M}(z_1)$ and $u_2 = \mathcal{M}(z_2)$ are solutions to \eqref{eq:M-def} then for any $x_0 \in \mathbb{R}^d$
  \begin{align}
    &\sup_{t \in [0,T]} \sup_{x \in \mathbb{R}^d} 
    \frac{|u_1(t,x) - u_2(t,x)|}{1 + |x-x_0|^\theta} 
    \leq Ce^{CT} \sup_{t \in [0,T]} \sup_{x \in \mathbb{R}^d} 
    \frac{|z_1(t,x) - z_2(t,x)|}{1 + |x-x_0|^\theta}.  \label{eq:M-Lip-w-proof}
  \end{align}
  The constant $C$ is independent of $x_0$ and $T$ and depends only on the constant $\kappa$ in \eqref{eq:f-cond-dissip} and $\theta>0$.

  In particular, when $z_1 = z_2$ this theorem proves the uniqueness of solutions to \eqref{eq:M-def}.
\end{theorem}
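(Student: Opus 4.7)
The plan is to set $\zeta := z_1-z_2$ and $v_i := u_i-z_i$, so that (as in the proof of Lemma~\ref{lem:a-priori1}) each $v_i$ satisfies $\partial_t v_i = \tfrac12\Delta v_i + f(u_i)$ with $v_i(0,\cdot)=0$, and their difference $w := v_1-v_2 = (u_1-u_2)-\zeta$ solves
\[
\partial_t w = \tfrac12\Delta w + (f(u_1)-f(u_2)), \qquad w(0,\cdot)=0.
\]
Since $u_1-u_2 = w+\zeta$, by the triangle inequality it suffices to bound $|w|_{C_{x_0,\theta}([0,T]\times\mathbb{R}^d)}$ by $Ce^{CT}|\zeta|_{C_{x_0,\theta}([0,T]\times\mathbb{R}^d)}$.

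I would replace the weight $1+|x-x_0|^\theta$ by the comparable smooth weight $\tilde\psi(x) := (1+|x-x_0|^2)^{\theta/2}$ and study $\tilde W(t,x) := w(t,x)/\tilde\psi(x)$, which lies in $C_0(\mathbb{R}^d)$ for each $t$ since $w$ is bounded by Lemma~\ref{lem:a-priori1}. A direct computation shows $\sup_x|\Delta\tilde\psi(x)/\tilde\psi(x)| \leq C_\theta$ for some $C_\theta = C(\theta,d)$, and
\[
\partial_t\tilde W = \tfrac12\Delta\tilde W + (\nabla\tilde\psi/\tilde\psi)\!\cdot\!\nabla\tilde W + \tfrac12(\Delta\tilde\psi/\tilde\psi)\tilde W + \tilde\psi^{-1}(f(u_1)-f(u_2)).
\]
Following the strategy of Proposition~\ref{prop-left-deriv-subdiff}, I would evaluate $\partial_t\tilde W$ at any point $x^*$ in the support of some $\mu\in\partial|\tilde W(t,\cdot)|_{C_0}$ with (say) $\tilde W(t,x^*)>0$; there $\nabla\tilde W(x^*)=0$ and $\Delta\tilde W(x^*)\leq 0$, so
\[
\partial_t\tilde W(t,x^*) \leq \tfrac{C_\theta}{2}\tilde W(t,x^*) + \tilde\psi(x^*)^{-1}\bigl(f(u_1(t,x^*))-f(u_2(t,x^*))\bigr).
\]

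The reaction term is controlled by splitting into two cases at $x^*$. If $u_1(t,x^*)\geq u_2(t,x^*)$, the one-sided Lipschitz condition \eqref{eq:f-cond-dissip} gives $f(u_1)-f(u_2)\leq \kappa(w+\zeta)$, so the reaction term is bounded by $\kappa^+|\tilde W(t,\cdot)|_{C_0} + |\kappa|\sup_{s\in[0,T]}|\zeta(s,\cdot)|_{C_{x_0,\theta}(\mathbb{R}^d)}$ (up to the equivalence constant between the two weights). If instead $u_1(t,x^*)<u_2(t,x^*)$, then $\tilde W(t,x^*)>0$ combined with $u_1-u_2<0$ forces $\zeta(t,x^*)<u_1-u_2<0$, whence $|\zeta(t,x^*)|>w(t,x^*)$ and consequently $|\tilde W(t,\cdot)|_{C_0}$ is already strictly less than $|\zeta(t,\cdot)|_{C_{x_0,\theta}(\mathbb{R}^d)}$. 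Combining these observations: whenever $|\tilde W(t,\cdot)|_{C_0} \geq |\zeta(t,\cdot)|_{C_{x_0,\theta}(\mathbb{R}^d)}$ one must be in the first case, and Proposition~\ref{prop-left-deriv-subdiff} together with integration against the probability measure from $\partial|\tilde W|_{C_0}$ yields
\[
\tfrac{d^-}{dt}|\tilde W(t,\cdot)|_{C_0} \leq \bigl(\tfrac{C_\theta}{2}+\kappa^+\bigr)|\tilde W(t,\cdot)|_{C_0} + |\kappa|\sup_{s\in[0,T]}|\zeta(s,\cdot)|_{C_{x_0,\theta}(\mathbb{R}^d)}.
\]

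This is precisely the hypothesis of the fast Gr\"onwall Lemma~\ref{lem:Gronwall-fast} (with $\psi(t)=|\zeta(t,\cdot)|_{C_{x_0,\theta}(\mathbb{R}^d)}$), whose conclusion gives $\sup_{t\in[0,T]}|\tilde W(t,\cdot)|_{C_0} \leq (1+|\kappa|T)e^{(C_\theta/2+\kappa^+)T}|\zeta|_{C_{x_0,\theta}([0,T]\times\mathbb{R}^d)}$. Absorbing $1+|\kappa|T$ into the exponent and translating back to the original weight yields \eqref{eq:M-Lip-w-proof} with a constant $C$ depending only on $\theta$, $d$, and $\kappa$, independent of $T$ and $x_0$. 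The main obstacle is that Proposition~\ref{prop-left-deriv-subdiff} requires $\tilde W$ to have continuous first derivatives, which mild solutions need not possess; I would handle this by performing the entire argument on the Yosida-regularized problems (with Lipschitz drift $f_n$) from the existence proof, where $u_{i,n}$ and $w_n$ are regular enough for the pointwise calculus, with all constants uniform in $n$, and then passing to the limit $n\to\infty$ using the locally uniform convergence established in the existence proof.
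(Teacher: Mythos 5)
Your proposal reproduces the paper's argument in all essential respects: the same decomposition $v_i = u_i - z_i$, the same smooth weight $(1+|x-x_0|^2)^{\theta/2}$, the same use of Proposition~\ref{prop-left-deriv-subdiff} at a global extremizer $x^*$ (where $\nabla = 0$ and $\Delta$ has a sign), the same sign-based dichotomy (either the one-sided Lipschitz condition applies or $|\tilde W(t,\cdot)|_{C_0}$ is already dominated by the weighted $|\zeta|$) leading to exactly the hypothesis of Lemma~\ref{lem:Gronwall-fast}, and the same translation back to the original weight. The only departure is your closing Yosida-regularization step to justify the pointwise calculus; the paper instead asserts directly that $v_1, v_2$ are differentiable, so your extra care there is a defensible (arguably more rigorous) addendum but does not change the structure of the proof.
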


\begin{proof}
  Let $z_1, z_2 \in C_b([0,T]\times \mathbb{R}^d)$. Let $u_1 =\mathcal{M}(z_1)$ and $u_2 = \mathcal{M}(z_2)$ be the solutions to \eqref{eq:M-def}. By Lemma \ref{lem:a-priori1}, $u_1\in C_b([0,T]\times \mathbb{R}^d)$ and $u_2  \in C_b([0,T]\times \mathbb{R}^d)$. Let $v_1 = u_1 - z_1$ and $v_2 = u_2-z_2$. $v_1$ and $ v_2$ are differentiable and
  \begin{equation}
    \frac{\partial v_i}{\partial t }(t,x) = \frac{1}{2}\Delta v_i(t,x) + f(v_i(t,x) + z_i(t,x)),  \ \ \ i=1,2.
  \end{equation}

  We will use the weights $(1 + |x-x_0|^2)^{-\frac{\theta}{2}}$ because they are differentiable in $x$ and there exists a constant $C>0$ such that
  \begin{equation} \label{eq:weights-equiv}
    \frac{1}{C(1 + |x-x_0|^\theta)} \leq (1 + |x-x_0|^2)^{-\frac{\theta}{2}} \leq \frac{C}{1 + |x-x_0|^\theta}, \ \ \ x \in \mathbb{R}^d.
  \end{equation}
  Let
  \begin{align}
    \label{eq:z-tilde} \tilde{z}(t,x):= (1 + |x-x_0|^2)^{-\frac{\theta}{2}} (z_1(t,x) - z_2(t,x))\\
    \label{eq:v-tilde} \tilde{v}(t,x):= (1 + |x-x_0|^2)^{-\frac{\theta}{2}}(v_1(t,x) - v_2(t,x)).
  \end{align}
  This weighted difference $\tilde{v}$ solves the PDE
  \begin{align} \label{eq:tilde-v-PDE}
    \frac{\partial \tilde{v}}{\partial t}(t,x) = &\frac{1}{2}\Delta \tilde{v}(t,x) + \theta \frac{(x-x_0)\cdot \nabla \tilde{v}(t,x)}{1 + |x-x_0|^2} \nonumber\\
    &+ \frac{\theta \left(d + (d + \theta -2)|x-x_0|^2\right)}{2(1 + |x-x_0|^2)^2} \tilde{v}(t,x) \nonumber\\
    &+ (1 + |x-x_0|^2)^{-\frac{\theta}{2}}(f(v_1(t,x) + z_1(t,x)) - f(v_2(t,x) + z_2(t,x))).
  \end{align}

  We proved in Lemma \ref{lem:a-priori1} that $v_1,v_2 \in C_b([0,T]\times \mathbb{R}^d)$. Therefore, $$\lim_{|x| \to \infty} |\tilde{v}(t,x)| = 0$$ and $\tilde v(t,\cdot)$ belong to $C_0(\mathbb{R}^d)$, the space of functions that vanish at infinity.

  By Proposition \ref{prop-left-deriv-subdiff}, the upper-left-derivative of the norm $|\tilde{v}(t,\cdot)|_{C_0}$ is bounded by
  \begin{equation} \label{eq:norm-deriv-bound}
    \frac{d^-}{d t} |\tilde v(t,\cdot)|_{C_0} \leq \left< \frac{\partial \tilde{v}}{\partial t}(t,\cdot), \mu_t \right>_{C_0,C_0^\star}
  \end{equation}
  where $\mu_t \in \partial |v(t,\cdot)|_{C_0}$, the subdifferential of $\tilde{v}(t,\cdot)$ given by \eqref{eq:subdiff}. By Proposition \ref{prop:subdff}, $\mu_t(dx) = \sgn(\tilde v(t,x))\tilde{\mu}_t(dx)$ where $\tilde{\mu}_t$ is a positive unit measure supported on the set $\{x \in \mathbb{R}^d: |\tilde v(t,x)| = | \tilde v(t,\cdot)|_{C_0}\}$. Because \eqref{eq:norm-deriv-bound} holds for any $\mu_t \in \partial |\tilde v(t,\cdot)|_{C_0}$,  we can take $\mu_t = \delta_{x_t} \sgn(\tilde v(t,x_t))$ where $x_t \in \mathbb{R}^d$ is a global maximizer or minimizer ($|\tilde v(t,\cdot)|_{C_0} = |\tilde v(t,x_t)| $).

  By \eqref{eq:norm-deriv-bound} and \eqref{eq:tilde-v-PDE},
  \begin{align} \label{eq:v-norm-bound}
    &\frac{d^-}{dt} |\tilde v(t,\cdot)|_{C_0} \nonumber \\
    &\leq  \sgn(\tilde v(t,x_t))\frac{1}{2}\Delta v(t,x_t) +  \sgn(\tilde v(t,x_t)) \frac{\theta(x_t-x_0)\cdot \nabla \tilde{v}(t,x_t)}{1 + |x_t- x_0|^2} \nonumber\\
    & \hspace{1.6cm}+  \frac{\theta(d + (d+\theta-2)|x_t-x_0|^2)}{2(1 + |x_t-x_0|^2)^2}\tilde v(t,x_t)\sgn(\tilde v(t,x_t))\nonumber\\
    & \hspace{1.6cm}+ (1 +|x_t-x_0|^2)^{-\frac{\theta}{2}}\sgn(\tilde{v}(t,x_t))\nonumber\\
    & \hspace{2.4cm}\times(f(v_1(t,x_t) +z_1(t,x_t)) - f(v_2(t,x_t) + z_2(t,x_t))) .
  \end{align}
  Because $\tilde{v}$ is differentiable and $x_t$ is either a global maximizer or minimizer,
  \begin{align*}
    &\sgn(\tilde v(t,x_t))\Delta \tilde{v}(t,x_t) \leq 0,\\
    &\nabla \tilde{v}(t,x_t) = 0, \text{ and }\\
    &v(t,x_t)\sgn(\tilde v(t,x_t)) = |v(t,\cdot)|_{C_0}.
  \end{align*}
  Furthermore, we note that
  \[\sup_{x \in \mathbb{R^d}}\frac{\theta(d + (d+\theta-2)|x-x_0|^2)}{2(1 + |x-x_0|^2)^2} <+\infty. \]

  Next we analyze the term $$(1 +|x_t-x_0|^2)^{-\frac{\theta}{2}}(f(v_1(t,x_t) + z_1(t,x_t)) - f(v_2(t,x_t) + z_2(t,x_t)))\sgn(\tilde{v}(t,x)).$$
  Observe that $$\sgn(\tilde{v}(t,x_t)) = \sgn(v_1(t,x_t) - v_2(t,x_t))$$ and
  $$\sgn(\tilde{v}(t,x_t)  + \tilde z(t,x_t)) =\sgn(v_1(t,x_t) + z_1(t,x_t) -(v_2(t,x_t)+ z_2(t,x_t))).$$
  We divide this into two possibilities. First, if $\sgn(\tilde{v}(t,x_t) + \tilde{z}(t,x_t)) = \sgn(\tilde{v}(t,x_t))$, then by \eqref{eq:f-cond-sign},
  \begin{align*}
    &(1 +|x_t-x_0|^2)^{-\frac{\theta}{2}}(f(v_1(t,x_t) + z_1(t,x_t)) - f(v_2(t,x_t) + z_2(t,x_t)))\sgn(\tilde{v}(t,x_t)) \\
    &=(1 +|x_t-x_0|^2)^{-\frac{\theta}{2}}(f(v_1(t,x_t) + z_1(t,x_t)) - f(v_2(t,x_t) + z_2(t,x_t)))\\
    &\qquad\qquad\times \sgn(\tilde{v}(t,x_t) + \tilde{z}(t,x_t))\\
    &=(1 +|x_t-x_0|^2)^{-\frac{\theta}{2}}(f(v_1(t,x_t) + z_1(t,x_t)) - f(v_2(t,x_t) + z_2(t,x_t)))\\
    &\qquad\qquad\times \sgn(v_1(t,x_t) + z_1(t,x_t) -v_2(t,x_t) - z_2(t,x_t))\\
    &\leq (1 +|x_t-x_0|^2)^{-\frac{\theta}{2}}\kappa|v_1(t,x_t) + z_1(t,x_t) -(v_2(t,x_t) + z_2(t,x_t))| \\
    &\leq \kappa|\tilde{v}(t,x_t) + \tilde{z}(t,x_t)|\\
    &\leq \kappa |\tilde{v}(t,\cdot)|_{C_0} + \kappa |\tilde{z}(t,\cdot)|_{C_0}.
  \end{align*}
  If $\sgn(\tilde{v}(t,x_t)) \not = \sgn(\tilde{v}(t,x_t) + \tilde{z}(t,x_t))$ then it must hold that
  $$ |\tilde{v}(t,x_t)|< |\tilde{z}(t,x_t)| .$$
  Because $|v(t,x_t)| = |v(t,\cdot)|_{C_0}$, a consequence of this inequality is that
  $$  |\tilde{v}(t,\cdot)|_{C_0} =  |\tilde{v}(t,x_t)| < |\tilde{z}(t,x_t)| \leq |\tilde{z}(t,\cdot)|_{C_0} .$$ Therefore,
  \begin{align} \label{eq:f-diff-bound}
    &(1 +|x_t-x_0|^2)^{-\frac{\theta}{2}}(f(v_1(t,x_t) + z_1(t,x_t)) - f(v_2(t,x_t) + z_2(t,x_t)))\sgn(\tilde{v}(t,x)) \nonumber\\
    &\leq \kappa|\tilde{v}(t,\cdot)|_{C_0} + \kappa|\tilde{z}(t,\cdot)|_{C_0} + \Phi(t) \mathbbm{1}_{\{|\tilde{v}(t,\cdot)|_{C_0} < |\tilde{z}(t,\cdot)|_{C_0}\}}
  \end{align}
  where
  \[\Phi(t) := (1 +|x_t-x_0|^2)^{-\frac{\theta}{2}}|f(v_1(t,x_t) + z_1(t,x_t)) - f(v_2(t,x_t) + z_2(t,x_t))|.\]

  We plug all of these observations into \eqref{eq:v-norm-bound},
  \begin{align*}
    &\frac{d^-}{dt} |\tilde{v}(t,\cdot)|_{C_0}
    \leq C|\tilde{v}(t,\cdot)|_{C_0} + C|\tilde{z}(t,\cdot)|_{C_0} + \Phi(t) \mathbbm{1}_{\{|v(t,\cdot)|_{C_0} <|\tilde{z}(t,\cdot)|_{C_0}\}}.
  \end{align*}
  By Lemma \ref{lem:Gronwall-fast} we can conclude that there exists $C>0$ such that
  \begin{equation}
    \sup_{t \in [0,T]} |\tilde{v}(t,\cdot)|_{C_0} \leq C e^{CT} \left(  \sup_{t \in [0,T]}|\tilde{z}(t,\cdot)|_{C_0} \right).
  \end{equation}

  By the definitions of $\tilde{v}$ and $\tilde{z}$, we have shown that
  \begin{align}
    &\sup_{t \in [0,T]} \sup_{x \in \mathbb{R}^d} (1 + |x-x_0|^2)^{-\frac{\theta}{2}}|v_1(t,x) - v_2(t,x)| \nonumber\\
    &\leq Ce^{CT} \sup_{t \in [0,T]} \sup_{x \in \mathbb{R}^d} (1 + |x-x_0|^2)^{-\frac{\theta}{2}}|z_1(t,x) - z_2(t,x)|.
  \end{align}

  Finally, the result follows because $u_i= v_i + z_i$ and because of \eqref{eq:weights-equiv}.
\end{proof}

\begin{corollary} \label{cor:M-lin-growth}
  For any $\theta>0$ there exists $C=C(\kappa,\theta)>0$ such that for any $z \in C_b([0,T]\times \mathbb{R}^d)$,
  \begin{align} \label{eq:M-a-priori}
    \sup_{t \in [0,T]} \sup_{x \in \mathbb{R}^d} \frac{|\mathcal{M}(z)(t,x)|}{1 + |x-x_0|^\theta}
    \leq C e^{C T} \left( 1 +  \sup_{t \in [0,T]}\sup_{x \in \mathbb{R}^d} \frac{|z(t,x)|}{1 + |x-x_0|^\theta}\right).
  \end{align}
\end{corollary}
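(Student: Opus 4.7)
The plan is to deduce this corollary from the preceding Lipschitz theorem by taking $z_2 \equiv 0$ as a reference and controlling $\mathcal{M}(0)$ separately. The zero function belongs to $C_b([0,T]\times\mathbb{R}^d)$, so $\mathcal{M}(0)$ is well-defined by the existence theorem for $\mathcal{M}$.

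First I would apply Lemma \ref{lem:a-priori1} with $z \equiv 0$: since $\sup_{t,x}|f(0)| = |f(0)|$, that lemma yields
\[
\sup_{t \in [0,T]} \sup_{x \in \mathbb{R}^d} |\mathcal{M}(0)(t,x)| \leq \left(\frac{e^{(1+2\kappa)T}-1}{1+2\kappa}\right)|f(0)|,
\]
which is bounded by $C_1 e^{C_1 T}$ for some constant $C_1 = C_1(\kappa, f(0))$. In particular, since $(1+|x-x_0|^\theta)^{-1} \leq 1$, the same bound controls the weighted norm of $\mathcal{M}(0)$, uniformly in $x_0$.

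Next I would invoke the Lipschitz estimate \eqref{eq:M-Lip-w-proof} with $z_1 = z$ and $z_2 = 0$, so that $u_1 = \mathcal{M}(z)$ and $u_2 = \mathcal{M}(0)$. This gives a constant $C_2 = C_2(\kappa,\theta)$ such that
\[
\sup_{t \in [0,T]} \sup_{x \in \mathbb{R}^d} \frac{|\mathcal{M}(z)(t,x) - \mathcal{M}(0)(t,x)|}{1+|x-x_0|^\theta} \leq C_2 e^{C_2 T} \sup_{t \in [0,T]} \sup_{x \in \mathbb{R}^d} \frac{|z(t,x)|}{1+|x-x_0|^\theta}.
\]
The constant is independent of $x_0$ and of $T$.

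Finally, the triangle inequality applied to $\mathcal{M}(z) = (\mathcal{M}(z) - \mathcal{M}(0)) + \mathcal{M}(0)$ combined with the two displays above yields
\[
\sup_{t,x} \frac{|\mathcal{M}(z)(t,x)|}{1+|x-x_0|^\theta} \leq C_2 e^{C_2 T} \sup_{t,x}\frac{|z(t,x)|}{1+|x-x_0|^\theta} + C_1 e^{C_1 T},
\]
and absorbing $C_1, C_2$ into a single $C = C(\kappa,\theta, f(0))$ delivers \eqref{eq:M-a-priori}. There is no real obstacle here; the only thing to note is that the weighted bound on $\mathcal{M}(0)$ is uniform in the center $x_0$ because $\mathcal{M}(0)$ is bounded in the unweighted supremum norm, which is why the constant $C$ on the right-hand side of \eqref{eq:M-a-priori} is independent of $x_0$.
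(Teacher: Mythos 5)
Your proof is correct and follows essentially the same route as the paper: compare $\mathcal{M}(z)$ to $\mathcal{M}(0)$, control $\mathcal{M}(0)$ via Lemma \ref{lem:a-priori1}, and control the difference via the Lipschitz estimate \eqref{eq:M-Lip-w-proof}. Your version is slightly more careful in writing $|f(0)|$ where the paper writes $f(0)$, but otherwise the two arguments are the same.
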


\begin{proof}
  Let $u_1 = \mathcal{M}(0)$, where $0 \in C_b([0,T]\times\mathbb{R}^d)$ denotes the constant zero function. By Lemma \ref{lem:a-priori1},
  \begin{equation}
    \sup_{t \in [0,T]}\sup_{x \in \mathbb{R}^d}\frac{|u_1(t,\cdot)|}{1 + |x-x_0|^\theta} \leq \left(\frac{e^{(2\kappa +1)T}-1}{2\kappa +1}\right)f(0).
  \end{equation}

  Let $u = \mathcal{M}(z)$ for some $z \in C_{b}([0,T]\times \mathbb{R}^d)$. Then by \eqref{eq:M-Lip-w-proof}, there exists $C>0$ such that
  \begin{align*}
    &\sup_{t \in [0,T]}\sup_{x \in \mathbb{R}^d}\frac{|u(t,x)|}{1 + |x-x_0|^\theta}\\
     &\leq \sup_{t \in [0,T]}\sup_{x \in \mathbb{R}^d} \frac{|u(t,x) - u_1(t,x)|}{1 + |x-x_0|^\theta} + \sup_{t \in [0,T]}\sup_{x \in \mathbb{R}^d} \frac{|u_1(t,x)|}{1 + |x-x_0|^\theta}\nonumber\\
    &\leq Ce^{CT}\sup_{t \in [0,T]} \sup_{x \in\mathbb{R}^d} \frac{|z(t,x)-0|}{1 + |x-x_0|^\theta} + Ce^{CT}.
  \end{align*}
\end{proof}
\begin{theorem} \label{thm:M-extension}
  For any $x_0 \in \mathbb{R}^d$ and $\theta>0$, $\mathcal{M}$ can be uniquely extended to a continuous function from $C_{x_0,\theta}([0,T]\times \mathbb{R}^d) \to C_{x_0,\theta}([0,T]\times \mathbb{R}^d)$.
  The extension satisfies both  \eqref{eq:M-Lip-w-proof} and \eqref{eq:M-a-priori} for all $z_1,z_2,z \in C_{x_0,\theta}([0,T]\times \mathbb{R}^d)$.
  If $\theta \in (0,2/\nu)$, where $\nu$ is the constant in \eqref{eq:f-growth}, then for all $z \in C_{x_0,\theta}([0,T]\times \mathbb{R}^d)$,  $u=\mathcal{M}(z)$ satisfies the integral equation \eqref{eq:M-def}.
  If $\theta \geq 2/\nu$, then it is possible that the integrand $$y \mapsto G(t-s,x-y)f(\mathcal{M}(z)(s,y))$$ may not be integrable.
\end{theorem}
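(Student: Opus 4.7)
The plan is to combine a density argument with the Lipschitz estimate \eqref{eq:M-Lip-w-proof} and then verify the integral equation in the subcritical range $\theta\nu<2$ by a dominated-convergence passage to the limit. I would proceed in four steps.

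First, I would show that $C_b([0,T]\times \mathbb{R}^d)$ is dense in $C_{x_0,\theta}([0,T]\times \mathbb{R}^d)$ under the weighted norm. Given $z \in C_{x_0,\theta}([0,T]\times \mathbb{R}^d)$, pick smooth radial cutoffs $\chi_n \in C_c^\infty(\mathbb{R}^d)$ with $\chi_n \equiv 1$ on $\{|x-x_0|\leq n\}$ and $\chi_n \equiv 0$ on $\{|x-x_0|\geq 2n\}$, and set $z_n(t,x) := z(t,x)\chi_n(x)$. Each $z_n$ lies in $C_b([0,T]\times \mathbb{R}^d)$, and
\[
|z-z_n|_{C_{x_0,\theta}([0,T]\times\mathbb{R}^d)} \leq \sup_{|x-x_0|\geq n}\sup_{t\in[0,T]} \frac{|z(t,x)|}{1+|x-x_0|^\theta} \longrightarrow 0
\]
as $n\to\infty$, by the defining vanishing property of the space.

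Second, the preceding theorem asserts that $\mathcal{M}:(C_b, |\cdot|_{C_{x_0,\theta}}) \to (C_b, |\cdot|_{C_{x_0,\theta}})$ is Lipschitz continuous with constant $Ce^{CT}$ depending only on $\kappa$, $\theta$, and $T$. Since $C_{x_0,\theta}([0,T]\times \mathbb{R}^d)$ is a Banach space containing $C_b$ densely, the standard extension theorem for uniformly continuous maps on a dense subset of a complete metric space yields a unique continuous extension of $\mathcal{M}$ to $C_{x_0,\theta}\to C_{x_0,\theta}$ that still satisfies \eqref{eq:M-Lip-w-proof}. The a priori bound \eqref{eq:M-a-priori} from Corollary \ref{cor:M-lin-growth} extends by continuity to all $z\in C_{x_0,\theta}$.

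Third, for $\theta\in(0,2/\nu)$, I would show that the extended $u:=\mathcal{M}(z)$ satisfies the integral equation \eqref{eq:M-def}. Choose $z_n \in C_b$ with $z_n\to z$ in $C_{x_0,\theta}$, and let $u_n:=\mathcal{M}(z_n)$. Then $u_n\to u$ in $C_{x_0,\theta}$, and by \eqref{eq:M-a-priori} there is an $M>0$ with $|u_n(s,y)|\leq M(1+|y-x_0|^\theta)$ for all $n$ and all $(s,y) \in [0,T]\times\mathbb{R}^d$. Each $u_n$ solves \eqref{eq:M-def} with $z$ replaced by $z_n$. Using the growth bound \eqref{eq:f-growth},
\[
G(t-s,x-y)|f(u_n(s,y))| \leq K\, G(t-s,x-y) \exp\!\bigl(K M^\nu (1+|y-x_0|^\theta)^\nu\bigr).
\]
Since $\theta\nu<2$, the quadratic Gaussian decay of $G$ in $|x-y|$ dominates the growth $|y-x_0|^{\theta\nu}$ in the exponent, so the right-hand side is integrable over $[0,t]\times\mathbb{R}^d$ uniformly in $n$. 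Continuity of $f$, pointwise convergence $u_n\to u$, and dominated convergence then yield \eqref{eq:M-def} for $u$.

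The main obstacle is the third step: constructing an integrable majorant for the nonlinear term. The constraint $\theta\nu<2$ is exactly what makes the quadratic Gaussian decay of $G$ dominate the exponential blow-up $\exp(KM^\nu|y-x_0|^{\theta\nu})$ produced by composing the growth bound on $f$ with the weighted pointwise bound on $u$. When $\theta\nu\geq 2$, this majorant fails, matching the caveat in the theorem that the integrand may not be integrable in that regime.
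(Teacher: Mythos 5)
Your proposal is correct and takes essentially the same route as the paper: density of $C_b$ in $C_{x_0,\theta}$, extension by uniform continuity, and a dominated-convergence passage to the limit in the integral equation using the Gaussian decay of $G$ against the subquadratic exponential growth $e^{C|y-x_0|^{\theta\nu}}$ when $\theta\nu<2$. The only cosmetic difference is the approximating sequence in the density step — you use compactly-supported cutoffs $z\chi_n$, whereas the paper dampens with the weight $(1+(|x-x_0|-n)^\theta)^{-1}$ outside the ball of radius $n$; both yield the same error estimate $\sup_{|x-x_0|>n}\sup_t \frac{|z(t,x)|}{1+|x-x_0|^\theta}\to 0$.
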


\begin{proof}
  First we argue that $C_b([0,T]\times \mathbb{R}^d)$ is dense in $C_{x_0,\theta}([0,T]\times \mathbb{R}^d)$ for any $\theta>0$. Given $\varphi \in C_{x_0,\theta}([0,T]\times \mathbb{R}^d)$, let
  \begin{equation}
    \varphi_n(t,x) =
    \begin{cases}
      \varphi(t,x) & \text{ if } |x-x_0|\leq n\\
      \displaystyle{\frac{\varphi(t,x)}{1 + (|x-x_0| - n)^{\theta}}}& \text{ if } |x-x_0|>n.
    \end{cases}
  \end{equation}

  Each $\varphi_n$ is bounded. In fact, because $\varphi \in C_{x_0,\theta}([0,T]\times \mathbb{R}^d)$,
  \[\lim_{|x|\to \infty}\sup_{t \in [0,T]} |\varphi_n(t,x)| = 0.\]

  Furthermore, we can check that
  \begin{align}
    \sup_{t \in [0,T]} \sup_{x \in \mathbb{R}^d}\frac{|\varphi_n(t,x) - \varphi(t,x)|}{1 + |x-x_0|^\theta}
    \leq  \sup_{t \in [0,T]} \sup_{|x-x_0|>n} \frac{|\varphi(t,x)|}{1 + |x-x_0|^\theta}.
  \end{align}
  Therefore, because we chose $\varphi \in C_{x_0,\theta}([0,T]\times \mathbb{R}^d)$,
  \[\lim_{n \to \infty} \sup_{t \in [0,T]} \sup_{x \in \mathbb{R}^d} \frac{|\varphi_n(t,x) - \varphi(t,x)|}{1 + |x-x_0|^\theta}=0.\]

  By Lipschitz continuity of $\mathcal{M}$ \eqref{eq:M-Lip-w-proof} and the density of $C_b([0,T]\times \mathbb{R}^d)$ in $C_{x_0,\theta}([0,T]\times \mathbb{R}^d)$ it follows that $\mathcal{M}$ can be uniquely extended to an operator from $C_{x_0,\theta}([0,T]\times \mathbb{R}^d) \to C_{x_0,\theta}([0,T]\times \mathbb{R}^d)$ satisfying \eqref{eq:M-Lip-w-proof}.

  Under assumption \eqref{eq:f-growth} that $|f(u)| \leq K e^{K |u|^\nu}$, we can show that the extension of $\mathcal{M}$ satisfies \eqref{eq:M-def} if $\theta \in (0,2/\nu)$. Let $z \in C_{x_0,\theta}([0,T]\times \mathbb{R}^d)$ and let $z_n$ be a sequence in $C_b([0,T]\times \mathbb{R}^d)$ that converges to $z$ in the $C_{x_0,\theta}([0,T]\times \mathbb{R}^d)$ norm. Let $u_n = \mathcal{M}(z_n)$. Each $u_n$ satisfies
  \begin{equation} \label{eq:un-eq}
    u_n(t,x) = \int_0^t \int_{\mathbb{R}^d} G(t-s,x-y)f(u_n(s,y))dyds + z_n(t,x).
  \end{equation}
  By the bound on $f$, for any $(s,y) \in [0,t]\times\mathbb{R}^d$,
  \begin{equation}
    |f(u_n(s,y))| \leq K e^{K|u_n(s,y)|^\nu}
    \leq K e^{K|u_n|_{C_{x_0,\theta}([0,T]\times \mathbb{R}^d)}^\nu(1 + |y-x_0|^\theta)^{\nu}}.
  \end{equation}
  By \eqref{eq:M-a-priori}, the $|u_n|$ are uniformly bounded in $C_{x_0,\theta}([0,T]\times \mathbb{R}^d)$. There exists $C>0$ such that for any $x,y \in \mathbb{R}^d$ and $0\leq s\leq t$,
  \[\sup_n G(t-s,x-y) |f(u_n(s,y))| \leq \frac{K}{(2\pi(t-s))^{\frac{d}{2}}} e^{-\frac{(x-y)^2}{2(t-s)}} e^{C|y-x_0|^{\theta \nu}}.  \]
  For any fixed $x \in \mathbb{R}^d$ and $t>0$,  when $\theta \nu<2$
  \[\int_0^t \int_{\mathbb{R}^d} \frac{K}{(2\pi(t-s))^{\frac{d}{2}}} e^{-\frac{(x-y)^2}{2(t-s)}+C|y-x_0|^{\theta \nu}}dyds<+\infty\]
  By \eqref{eq:un-eq}, the dominated convergence theorem, and the fact that $u_n(t,x) \to u(t,x)$ and $z_n(t,x) \to z(t,x)$,
  \[u(t,x) = \int_0^t \int_{\mathbb{R}^d} G(t-s,x-y)f(u(s,y))dyds + z(t,x).\]

\end{proof}

Finally, we state without proof that whenever $z(t,x)$ is a random field that is adapted to a filtration $\mathcal{F}_t$,  $\mathcal{M}(z)(t)$ is adapted to the same filtration. This is clear from \eqref{eq:M-def}.
\begin{proposition}
  Let $\theta \in (0, 2/\nu)$. If $z(t,x)$ is a random field such that $$\Pro(z \in C_{x_0,\theta}([0,T]\times \mathbb{R}^d))=1$$ and it is adapted with respect to an increasing right-continuous filtration $\mathcal{F}_t$, then $\mathcal{M}(z)(t,x)$ is a well-defined random field adapted to the same filtration.
\end{proposition}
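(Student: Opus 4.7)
The well-definedness of $\mathcal{M}(z)(t,x)$ as a $C_{x_0,\theta}([0,T]\times\mathbb{R}^d)$-valued random field is essentially immediate from Theorem \ref{thm:M-extension}: on the probability-one event $\{z \in C_{x_0,\theta}([0,T]\times\mathbb{R}^d)\}$ the extension of $\mathcal{M}$ is defined, and since $\theta \in (0,2/\nu)$ the resulting object satisfies the integral equation \eqref{eq:M-def} path-by-path. So the substantive content is adaptedness, and the plan is to deduce it from a causality property of $\mathcal{M}$ combined with its Lipschitz continuity.

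The main step is the following causality claim: for any $t \in [0,T]$ and any two deterministic inputs $z_1, z_2 \in C_{x_0,\theta}([0,T]\times\mathbb{R}^d)$ that agree on $[0,t]\times\mathbb{R}^d$, one has $\mathcal{M}(z_1) = \mathcal{M}(z_2)$ on $[0,t]\times\mathbb{R}^d$. To prove this I would first verify it when $z_1,z_2 \in C_b([0,T]\times\mathbb{R}^d)$ by restricting the integral equation \eqref{eq:M-def} to times in $[0,t]$: the values $\mathcal{M}(z_i)(s,x)$ for $s \leq t$ depend only on $z_i$ restricted to $[0,t]\times\mathbb{R}^d$ and on $\mathcal{M}(z_i)$ itself over $[0,s]\times\mathbb{R}^d$, so applying the uniqueness half of \eqref{eq:M-Lip-w-proof} on the interval $[0,t]$ gives $\mathcal{M}(z_1) = \mathcal{M}(z_2)$ there. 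The extension to $z_1,z_2 \in C_{x_0,\theta}([0,T]\times\mathbb{R}^d)$ is then immediate from the density argument in Theorem \ref{thm:M-extension} and the Lipschitz bound \eqref{eq:M-Lip-w-proof}: approximate $z_i$ by bounded sequences $z_i^{(n)}$, truncate them identically on $[0,t]$ so that $z_1^{(n)} = z_2^{(n)}$ there, apply the bounded-case result, and pass to the limit.

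Given the causality claim, adaptedness is obtained as follows. Define the restriction map $R_t: C_{x_0,\theta}([0,T]\times\mathbb{R}^d) \to C_{x_0,\theta}([0,t]\times\mathbb{R}^d)$. By causality, the value $\mathcal{M}(z)(t,x)$ is a function only of $R_t(z)$, and that function is Lipschitz (hence Borel) by a reapplication of \eqref{eq:M-Lip-w-proof} restricted to $[0,t]$. Since $z$ is a spatially homogeneous Gaussian-noise-adapted random field whose paths lie in a separable subspace of the weighted continuous-function space, the restriction $R_t(z)$ is $\mathcal{F}_t$-measurable as a Banach-space-valued random variable: its values at a countable dense set of $(s,x) \in [0,t]\times\mathbb{R}^d$ are $\mathcal{F}_t$-measurable by hypothesis, and by continuity these determine the whole restricted path. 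Composing the Borel map $\zeta \mapsto \mathcal{M}(\zeta)(t,x)$ with $R_t(z)$ yields $\mathcal{F}_t$-measurability of $\mathcal{M}(z)(t,x)$.

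The only delicate point I expect is the measurability of $R_t(z)$ as a Banach-space-valued random variable, which requires separability of the relevant function space or a careful countable-dense-set argument; right-continuity of the filtration helps here in a standard way but is not essential beyond ensuring that pointwise limits of $\mathcal{F}_t$-measurable quantities remain $\mathcal{F}_t$-measurable. The causality claim itself is essentially a restatement of the uniqueness theorem already established in \eqref{eq:M-Lip-w-proof}, so no new analytic ingredient is needed.
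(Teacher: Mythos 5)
The paper itself offers no proof of this proposition; it merely notes that the adaptedness ``is clear from \eqref{eq:M-def}.'' Your proposal is a correct formalization of exactly what the author is pointing at with that remark: the integral equation \eqref{eq:M-def} is manifestly causal (for $s\le t$, the equation for $\mathcal{M}(z)(s,x)$ involves $z$ and $\mathcal{M}(z)$ only at earlier or equal times), and the unique solvability encoded in the $z_1=z_2$ case of \eqref{eq:M-Lip-w-proof} upgrades this observation to the statement that $\mathcal{M}(z)|_{[0,t]\times\mathbb{R}^d}$ is a function of $z|_{[0,t]\times\mathbb{R}^d}$ alone. Combined with the Lipschitz estimate applied over $[0,t]$ (which gives Borel measurability of that function) and separability of the weighted spaces (which lets you pass from the coordinate-wise $\mathcal{F}_t$-measurability of $z$ to Borel measurability of the $C_{x_0,\theta}([0,t]\times\mathbb{R}^d)$-valued restriction), this does yield adaptedness. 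The causality argument works cleanly because $\mathcal{M}$ restricted to $[0,t]$ is literally the same operator built from the same integral equation on the shorter interval, so there is no issue passing from $C_b$ to $C_{x_0,\theta}$ via the density argument in Theorem \ref{thm:M-extension}. One small remark: as you yourself note, right-continuity of the filtration is not actually used anywhere in this argument; the paper lists it as a hypothesis but it plays no role here, since pointwise limits of $\mathcal{F}_t$-measurable random variables are $\mathcal{F}_t$-measurable without any completeness or right-continuity assumption.
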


\section{Proof of Theorem \ref{thm:exist-unique}} \label{S:existence}

%
%
%

\begin{proof}
  If a solution $u(t,x)$ to \eqref{eq:mild} exists, then define
  \[Z(t,x) = \int_0^t \int_{\mathbb{R}^d} G(t-s,x-y)\sigma(u(s,y))W(dyds).\]

  Let $U_0(t,x) = \int_{\mathbb{R}^d}G(t,x-y)u_0(y)dy$. Then letting $\mathcal{M}$ be the mapping defined in \eqref{eq:M-def},
  \[u(t,x) = \mathcal{M}( U_0 + Z)(t,x).\]

  In particular, this means that
  \begin{equation} \label{eq:Z-mild-form}
    Z(t,x) = \int_0^t \int_{\mathbb{R}^d} G(t-s,x-y)\sigma(\mathcal{M}(U_0 + Z)(s,y))W(dyds).
  \end{equation}
  We will show that a solution to \eqref{eq:Z-mild-form} exists via a Picard iteration argument. Then $u = \mathcal{M}(U_0 + Z)$ is a solution to \eqref{eq:mild}.
  Let
  \begin{align}
    &Z_0(t,x) := 0\\
    &Z_{n+1}(t,x) : = \int_0^t \int_{\mathbb{R}^d} G(t-s,x-y)\sigma(\mathcal{M}(U_0 + Z_n)(s,y))W(dyds).
  \end{align}
  Each of these $Z_n$ is well defined and for $\theta, p$ satisfying \eqref{eq:init-data-assumption}-\eqref{eq:p-cond} each $Z_n$ satisfies
  \begin{equation} \label{eq:Z_n-well-posed}
    \sup_{x_0 \in \mathbb{R}^d} \E \left|\sup_{t \in [0,T]} \sup_{x \in \mathbb{R}^d} \frac{|Z_n(t,x)|}{1 + |x-x_0|^\theta} \right|^p<+\infty
  \end{equation}
  by the Lipschitz continuity of $\sigma$ (Assumption \ref{assum:sigma}), the linear growth of $\mathcal{M}$ (Corollary \ref{cor:M-lin-growth}), the properties of $u_0$ (Assumption \ref{assum:init-data}), and the properties of the stochastic convolution (Theorem \ref{thm:stoch-conv-growth-rate}). We omit the details for the proof of \eqref{eq:Z_n-well-posed} because the argument is nearly identical to the following argument proving that $\{Z_n\}$ is a contraction.

  We will show that $\{Z_n\}$ is a contraction in the metric
  \[\sup_{x_0 \in \mathbb{R}^d} \E \left|\sup_{t \in [0,T]}\sup_{x \in \mathbb{R}^d} \frac{|Z_{n+1}(t,x) - Z_n(t,x)|}{1 + |x-x_0|^\theta} \right|^p\]
 when $\theta$ and $p$ satisfy \eqref{eq:init-data-assumption}-\eqref{eq:p-cond} and $T>0$ is small enough.

  Because $p> \frac{(\theta+1)(d+1)}{\theta}$ implies $\theta> \frac{d+1}{p-(d+1)}$, \eqref{eq:stoch-conv-bound} guarantees that there exists $C_T>0$ such that
  \begin{align}
    &\sup_{x_0 \in \mathbb{R}^d} \E \left|\sup_{t \in [0,T]}\sup_{x \in \mathbb{R}^d}\frac{ |Z_{n+1}(t,x) - Z_n(t,x)|}{1 + |x-x_0|^\theta} \right|^p\nonumber\\
    &\leq C_T \sup_{s \in [0,T]} \sup_{x_0 \in \mathbb{R}^d} \E \left|\sigma(\mathcal{M}(U_0 + Z_{n})(s,x_0)) - \sigma(\mathcal{M}(U_0+Z_{n-1})(s,x_0))\right|^p.
  \end{align}
  Applying the Lipschitz continuity of  $\sigma$ (Assumption \ref{assum:sigma}), we see that
  \begin{align*}
    &\sup_{x_0 \in \mathbb{R}^d} \E \left|\sup_{t \in [0,T]}\sup_{x \in \mathbb{R}^d}\frac{ |Z_{n+1}(t,x) - Z_n(t,x)|}{1 + |x-x_0|^\theta} \right|^p\nonumber\\
    &\leq C_T \sup_{s \in [0,T]} \sup_{x_0 \in \mathbb{R}^d}\E \left|\mathcal{M}(U_0+Z_n)(s,x_0) - \mathcal{M}(U_0+ Z_{n-1})(s,x_0)\right|^p.
  \end{align*}
  We can trivially bound the right-hand side of the above expression
  \begin{align*}
    &\sup_{s \in [0,T]} \sup_{x_0 \in \mathbb{R}^d} \E\left|\mathcal{M}(U_0+Z_n)(s,x_0) - \mathcal{M}(U_0+ Z_{n-1})(s,x_0)\right|^p \\ &\leq \sup_{x_0 \in \mathbb{R}^d} \E \left|\sup_{s \in [0,T]} \sup_{x \in \mathbb{R}^d} \frac{|\mathcal{M}(U_0 + Z_n)(s,x) - \mathcal{M}(U_0 + Z_{n-1})(s,x)}{1 + |x-x_0|^\theta} \right|^p.
  \end{align*}
  By the Lipschitz continuity of $\mathcal{M}$ (see \eqref{eq:M-Lip-w-proof}) in the $C_{x_0,\theta}([0,T]\times \mathbb{R}^d)$ norm (remembering that the Lipschitz constant does not depend on $x_0$),
  \begin{align}
    &\sup_{x_0 \in \mathbb{R}^d} \E \left|\sup_{t \in [0,T]}\sup_{x \in \mathbb{R}^d} \frac{|Z_{n+1}(t,x) - Z_n(t,x)|}{1 + |x-x_0|^\theta} \right|^p\nonumber\\
    &\leq C_T e^{CT p} \sup_{x_0 \in \mathbb{R}^d} \E \left|\sup_{s \in [0,T]}\sup_{x \in \mathbb{R}^d} \frac{|Z_n(s,x) - Z_{n-1}(s,x)|}{1 + |x-x_0|^\theta} \right|^p.
  \end{align}

  By choosing $T_0$ small enough so \eqref{eq:C-small} guarantees that $C_{T_0} e^{CT_0p}< 1$ we have a contraction. Therefore, $Z_n$ converges to a limit $Z$. $Z$ must be a solution to \eqref{eq:Z-mild-form} on $[0,T_0]$ and
  \begin{equation*} 
    \sup_{x_0 \in \mathbb{R}^d}\E \left|\sup_{t \in [0,T_0]}\sup_{x \in \mathbb{R}^d} \frac{|Z(t,x)|}{1 + |x-x_0|^\theta} \right|^p<+\infty.
  \end{equation*}
  Then $u = \mathcal{M}(U_0 + Z)$ is a mild solution to \eqref{eq:mild} and by Corollary \ref{cor:M-lin-growth},
  \[\sup_{x_0 \in \mathbb{R}^d}\E \left|\sup_{t \in [0,T_0]}\sup_{x \in \mathbb{R}^d} \frac{|u(t,x)|}{1 + |x-x_0|^\theta} \right|^p<+\infty.\]
  We can repeat the procedure with $u_0(x) = u(T_0,x)$ (notice that $u(T_0,\cdot)$ satisfies Assumption \ref{assum:init-data}) to get a solution for $u$ on $[T_0,2T_0]$. By repeating this procedure, we can show that a global in time solution $Z$ solving \eqref{eq:Z-mild-form} exists.

  These arguments also show that for any $T>0$ and $p$ and $\theta$ satisfying \eqref{eq:init-data-assumption}-\eqref{eq:p-cond},
  \begin{equation} \label{eq:Z-bound}
    \sup_{x_0 \in \mathbb{R}^d}\E \left|\sup_{t \in [0,T]}\sup_{x \in \mathbb{R}^d} \frac{|Z(t,x)|}{1 + |x-x_0|^\theta} \right|^p<+\infty.
  \end{equation}
  By Corollary \ref{cor:M-lin-growth} and \eqref{eq:Z-bound}, the solution $u = \mathcal{M}(U_0 + Z)$ satisfies \eqref{eq:solution-bounds}.

  Now we prove uniqueness. The Picard iteration arguments prove that the solution $Z$ to \eqref{eq:Z-mild-form} is unique in the class of adapted random fields satisfying \eqref{eq:Z-bound}. If $u_2$ is another mild solution to \eqref{eq:mild} that satisfies \eqref{eq:Lp-bounded}, let
  \[Z_2(t,x):= \int_0^t \int_{\mathbb{R}^d} G(t-s,x-y)\sigma(u_2(s,y))W(dyds). \]
  Note that $u_2 = \mathcal{M}(U_0 + Z_2)$ and therefore, $Z_2$ solves \eqref{eq:Z-mild-form}.
  Then by \eqref{eq:stoch-conv-bound} and \eqref{eq:Lp-bounded}, $Z_2$ satisfies \eqref{eq:Z-bound}. Therefore, by our previous Picard iteration arguments, $Z_2 = Z$ and by \eqref{eq:M-Lip-w-proof}, $u_2 =u$.
\end{proof}

\section*{Acknowledgements}
The author thanks Le Chen for several interesting discussions.

\bibliography{SPDE}
\bibliographystyle{amsalpha}

\end{document}